\newcommand{\III}{I\hspace{-0.1cm}I\hspace{-0.1cm}I}
\DeclareMathOperator{\hol}{hol}
\newtheorem{theorem}[subsection]{Theorem}
\newtheorem*{theorem*}{Theorem}
\newtheorem{proposition}[subsection]{\rm\bf Proposition}
\newtheorem{lemma}[subsection]{Lemma}
\newtheorem{corollary}[subsection]{Corollary}
\newtheorem{remark}[subsection]{Remark}
\newtheoremstyle{named}{}{}{\itshape}{}{\bfseries}{.}{.5em}{#1 \thmnote{#3}}
\theoremstyle{named}
\newcommand{\C}{{\mathbb{C}}}
\newcommand{\CP}{{\mathbb{CP}}}
\newcommand{\N}{{\mathbb{N}}}
\newcommand{\HH}{{\mathbb{H}}}
\newcommand{\R}{{\mathbb{R}}}
\newcommand{\Z}{{\mathbb{Z}}}
\newcommand{\cB}{{\mathcal{B}}}
\newcommand{\cF}{{\mathcal{F}}}
\newcommand{\cE}{{\mathcal{E}}}
\newcommand{\cH}{{\mathcal{H}}}
\newcommand{\cSH}{{\mathcal{SH}}}
\newcommand{\cAH}{{\mathcal{AH}}}
\newcommand{\cQH}{{\mathcal{QH}}}
\newcommand{\cP}{{\mathcal{P}}}
\newcommand{\cR}{{\mathcal{R}}}
\newcommand{\cT}{{\mathcal{T}}}
\newcommand{\cV}{{\mathcal{V}}}
\newcommand{\cX}{{\mathcal{X}}}
\newcommand{\cML}{{\mathcal{ML}}}
\newcommand{\cGL}{{\mathcal{GL}}}
\newcommand{\cES}{{\mathcal{ES}}}
\newcommand{\cQF}{{\mathcal{QF}}}
\newcommand{\Isom}{\rm{Isom}}
\newcommand{\PSL}{\rm{PSL}}
\DeclareMathOperator{\id}{id}
\DeclareMathOperator{\dev}{dev}
\DeclareMathOperator{\Gr}{Gr}
\DeclareMathOperator{\Pl}{Pl}
\DeclareMathOperator{\proj}{proj}
\DeclareMathOperator{\mm}{m}
\DeclareMathOperator{\bb}{b}
\DeclareMathOperator{\EE}{E}
\DeclareMathOperator{\BB}{B}
\DeclareMathOperator{\UU}{U}
\newcommand{\monic}{\xhookrightarrow{}}
\newcommand{\restr}[1]{|_{#1}}
\newcounter{notes}%
\title{Bending parameterization of one-sided degenerated Kleinian surface groups}
\author{Bruno Dular}
\address{Bruno Dular:
University of Luxembourg, FSTM, Department of Mathematics, 
Maison du nombre, 6 avenue de la Fonte,
L-4364 Esch-sur-Alzette, Luxembourg}
\email{bruno.dular@uni.lu}
\date{v2, \today}
\begin{document}
\begin{abstract}
	It was recently proved that quasi-Fuchsian manifolds are uniquely determined by their bending laminations. This paper concerns a similar result for certain non-quasi-Fuchsian manifolds~: those obtained by degenerating one end but not the other, i.e.\ those appearing in boundaries of Bers slices. More precisely, we show that such hyperbolic manifolds are uniquely determined by the end structure of the degenerated end and the bending lamination of the other. The end structure consists of the parabolic locus, which is a multicurve, together with ending laminations or conformal structures on each components of its complement.
\end{abstract}

\maketitle

\section{Introduction}
This paper focuses on $3$-dimensional hyperbolic manifolds (complete Riemannian $3$-manifolds locally isometric to the $3$-dimensional hyperbolic space $\HH^3$) that are homeomorphic to $S\times \R$, where $S=S^+$ is a closed oriented surface of genus $\geq 2$, and $S^-$ is the same surface with reversed orientation. Let $\Gamma\coloneqq\pi_1(M)=\pi_1(S)$. Such a manifold is either \emph{convex-cocompact}, it is then called \emph{quasi-Fuchsian}, or not, in which case it can be obtained as a \emph{limit} of quasi-Fuchsian ones, by the density theorem \cite{MR2079598,MR2821565,MR3001608}.

To a $3$-dimensional hyperbolic manifold $M$ homeomorphic to $S\times\R$, one can associate the action $\rho\colon\Gamma\to\PSL(2,\C)\cong\Isom^+(\HH^3)$ of its fundamental group on its universal covering $\widetilde{M}\cong\HH^3$ by isometries, which is a discrete and faithful representation, and $M$ is homeomorphic to $M_{\rho}\coloneqq\HH^3/\rho(\Gamma)$. There are two central objects for studying the geometry of $M$~: its conformal boundary, or boundary at infinity, and its convex core. The action $\rho$ extends to an action by conformal transformations of the conformal boundary $\partial_{\infty}\HH^3\cong S^2$. Its limit set $\Lambda(\rho)$ is a closed subset of $S^2$, with complement its domain of discontinuity $\Omega(\rho)$. The quotient $\partial_{\infty}\rho\coloneqq\Omega(\rho)/\rho(\Gamma)$ is a finite union of Riemann surfaces of finite type \cite{MR167618}. It is the \emph{conformal boundary} of $M$. The \emph{convex core} $C(M)=C(\rho)$ is the quotient of the convex hull of $\Lambda(\rho)$ in $\HH^3$ by the action $\rho$. It is a convex submanifold of $M$, and is actually the smallest convex submanifold whose inclusion $C(M)\monic M$ is a homotopy equivalence.

The \emph{algebraic deformation space} of hyperbolic structures on $S\times\R$, denoted $\cAH(S)$, is the space of discrete and faithful representations $\rho\colon\Gamma\to\PSL(2,\C)$, up to conjugation. Depending on the context, an element $\rho\in\cAH(S)$ is identifed with (the conjugacy class of) its image $\rho(\Gamma)\subset\PSL(2,\C)$, called a \emph{Kleinian surface group}, or with the hyperbolic manifold $M_{\rho}\coloneqq\HH^3/\rho(\Gamma)$, a \emph{hyperbolic structure} on $S\times\R$. We usually restrict our attention to the \emph{orientation-preserving component} $\cAH_o(S)$ of $\cAH(S)$ (see Remark \ref{rmk:orientation}).\footnote{This is harmless since the other component is another copy of $\cAH_o(S)$.}

The interior of $\cAH_o(S)$ is denoted $\cQF_o(S)$ and consists of \emph{quasi-Fuchsian} representations, characterized by the compactness of their convex core. Given $\rho\in\cQF_o(S)$, $\Omega(\rho)$ is topologically the union of two disks and $\partial_{\infty}\rho$ is a Riemann surface structure $(\mm^-(\rho),\mm^+(\rho))$ on $S^-\sqcup S^+$. By the simultaneous uniformization theorem of Ahlfors-Bers \cite{MR111834}, the induced map $$\mm^-\times \mm^+\colon\cQF_o(S)\to\cT(S^-)\times\cT(S^+)$$
is a biholomorphic homeomorphism. Let $\cF_o(S)\subset\cQF_o(S)$ be the subspace of \emph{Fuchsian} structures, i.e.\ those $\rho$ with image in $\PSL(2,\R)$, up to conjugation.

Given a non-Fuchsian quasi-Fuchsian structure $\rho\in\cQF_o(S)-\cF_o(S)$, its convex core $C(\rho)$ is homeomorphic to $S\times [0,1]$, so its boundary $\partial C(\rho)$ is also (topologically) a copy of $S^-\sqcup S^+$, but now lying as a \emph{pleated surface} inside $M_{\rho}$, i.e.\ a piecewise totally geodesic subsurface \cite{MR1435975,MR2230672}. Its \emph{pleating} or \emph{bending} is recorded by a pair of measured laminations $(\bb^-(\rho),\bb^+(\rho))\in\cML(S^-)\times\cML(S^+)$~: the pleating locus constitutes a geodesic lamination and the bending angles, or limits thereof, a transverse measure. The map
$$\bb^-\times\bb^+\colon\cQF_o(S)\to\cML(S^-)\times\cML(S^+)$$
is continuous \cite{MR1331998,MR2464096}. Bonahon and Otal explicit its image in \cite{MR2144972} (in the more general case of geometrically finite structures, and later generalized to the compressible case by Lecuire \cite{MR2207784}): it is the subspace $\cML^{\perp}_{<\pi}(S^-\sqcup S^+)$ of filling pairs\footnote{A pair $(\mu^-,\mu^+)\in\cML(S^-)\times\cML(S^+)$ is \emph{filling} if, after identifying $S^-$ and $S^+$ any essential closed curve on $S$ intersects $\mu^-\cup\mu^+$.} $(\mu^-,\mu^+)$ without closed leaves of weight $\geq\pi$. They show that it is proper onto its image, and also that $\bb^-\times\bb^+$ is injective over \emph{rational laminations}, i.e.\ weighted multicurves. Bonahon also showed that $\bb^-\times\bb^+$ is injective in a neighborhood of Fuchsian structures inside $\cQF_o(S)$ \cite{MR2186972}. Keen and Series proved injectivity when $S$ is the punctured torus \cite{MR2052972,MR2258745}. The author and Schlenker showed in \cite{dularschlenker2024pleating} that $\bb^-\times\bb^+$ is injective on $\cQF_o(S)-\cF_o(S)$.\footnote{Note that the result of Dular-Schlenker \cite{dularschlenker2024pleating} only applies when $S$ is closed, while Bonahon's result \cite{MR2186972} (and also Keen-Series) also covers non-closed surfaces.} It follows that the map
$$\bb^-\times\bb^+\colon\cQF_o(S)-\cF_o(S)\to\cML^{\perp}_{\pi}(S^-\sqcup S^+)$$
is a homeomorphism.

Using the same methods as in \cite{dularschlenker2024pleating}, we first give a mix of the above two parameterizations of quasi-Fuchsian structures, which will be a key step in the proof of the main result. Let $\cML_{<\pi}(S^+)$ be the space of measured laminations on $S^+$ without closed leaves of weight $\geq\pi$.
\begin{theorem*}[Theorem \ref{thm:prescribed_metric_and_bending} in the text]
	Consider the map $\mm^-\times\bb^+\colon\cQF_o(S)\to\cT(S^-)\times\cML_{<\pi}(S^+)$ sending a quasi-Fuchsian structure $\rho$ to the pair $(\mm^-(\rho),\bb^+(\rho))$ of the conformal structure of the ideal boundary of its bottom end and the bending lamination of the top boundary of its convex core. It is a homeomorphism.
\end{theorem*}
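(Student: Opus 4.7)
The plan is to show that $\mm^-\times\bb^+$ is a continuous, proper bijection; since source and target are connected manifolds of the same real dimension $12g-12$, invariance of domain (or a degree argument) then yields that it is a homeomorphism onto $\cT(S^-)\times\cML_{<\pi}(S^+)$. Continuity is immediate: $\mm^-$ depends continuously on $\rho$ by Ahlfors--Bers theory, and $\bb^+$ on $\cQF_o(S)$ by Keen--Series \cite{MR1331998} and Bonahon \cite{MR2464096}. The image lies in $\cT(S^-)\times\cML_{<\pi}(S^+)$ because bending laminations of convex cores cannot have closed leaves of weight $\geq\pi$.

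The main work is injectivity. Since $\mm^-$ is constant on each Bers slice, it suffices to show that, for each $\mm^-_0\in\cT(S^-)$, the restriction $\bb^+\colon B_{\mm^-_0}\to\cML_{<\pi}(S^+)$ is injective. My plan is to pass to the viewpoint of complex projective structures: the Bers embedding realizes $B_{\mm^-_0}$ as an open subset of $\cCP(S^+)$, and under Thurston's grafting parameterization $\cCP(S^+)\cong\cT(S^+)\times\cML(S^+)$, the second coordinate is the bending measure of the associated pleated surface, which in this setting coincides with $\bb^+$. Injectivity then amounts to the statement that, inside $\cCP(S^+)$, the Bers slice meets each pleating ray $\cT(S^+)\times\{\mu\}$ in at most one point. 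This rigidity statement is the mixed-parameterization analogue of the main result of \cite{dularschlenker2024pleating}, and I expect the proof to proceed by an adapted version of their variational argument: given two candidate preimages, interpolate between them by a suitable deformation and study the sign of the variation of a convex or monotone functional (the renormalized volume, or a combination of the convex core volume with the length of the bending lamination) to force the two preimages to coincide.

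For properness, assume $\rho_n\in\cQF_o(S)$ leaves every compact. After passing to a subsequence, either $\mm^-(\rho_n)\to\infty$ in $\cT(S^-)$ and we are done, or $\mm^-(\rho_n)$ stays bounded and $\rho_n$ approaches the Bers boundary of a fixed Bers slice. In the latter case, either the top end becomes simply degenerate, in which case the limiting top bending datum is an ending lamination without transverse measure and $\bb^+(\rho_n)$ leaves every compact of $\cML_{<\pi}(S^+)$, or a curve gets pinched to a rank-one cusp on top, in which case the bending angle along that curve tends to $\pi$ and $\bb^+(\rho_n)$ again leaves $\cML_{<\pi}(S^+)$. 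Combined with continuity and injectivity, invariance of domain then yields the homeomorphism.

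The main obstacle is clearly the injectivity step: while the reduction to \emph{the Bers slice meets each pleating ray in at most one point} is conceptually clean, proving this rigidity requires adapting the full technical machinery of \cite{dularschlenker2024pleating} to the mixed setting, which is where the substantial work lies. A secondary but nontrivial point is to verify that in each Bers boundary degeneration scenario the bending lamination does leave $\cML_{<\pi}(S^+)$, in order to establish properness.
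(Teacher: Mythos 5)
The global skeleton of your plan (continuity $+$ properness $+$ injectivity $+$ invariance of domain, with the reduction of injectivity to each fixed Bers slice $B_{\mm^-_0}$) matches the paper's, and the continuity discussion is fine. The genuine gap is the injectivity step, which in your write-up is deferred rather than proved, and deferred in a direction that does not work. The mechanism of \cite{dularschlenker2024pleating} is \emph{not} a variational or monotonicity argument: there is no interpolation between two candidate preimages and no functional (renormalized volume, convex core volume, length of the bending lamination) whose monotonicity is known to separate quasi-Fuchsian structures with the same bending data --- the paper explicitly notes that no direct argument of this kind is available even for general geometrically finite structures. What actually makes injectivity work is a topological degeneration argument: (i) Chen--Schlenker's theorem that the smooth analogue $\mm^-\times\III^+_K$ (third fundamental form of the constant curvature $K$ surface foliating the top end) is a homeomorphism for every $K\in(-1,0)$; (ii) the convergence $\III^+_K\to\bb^+$ as $K\to -1$, uniformly on compacta (Bonsante--Mondello--Schlenker, Belraouti), after identifying targets via the earthquake map; (iii) a Finney-type theorem asserting that a compact ANR fibre of a continuous limit of open embeddings between manifolds of the same dimension is weakly contractible; and (iv) real-analyticity of the fibres of $\mm^-\times\bb^+$ via Bonahon's shear-bend coordinates, which makes them ANRs and endows them with a mod $2$ fundamental class (Borel--Haefliger, Sullivan), so that a contractible compact fibre must be $0$-dimensional, i.e.\ a point. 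Your reformulation that the Bers slice should meet each pleating ray in at most one point is a correct restatement of what must be shown, but the proposal contains no argument for it.

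Secondarily, your properness sketch asserts precisely the two facts that carry the content of that step: that a simply degenerate top end in the algebraic limit forces $\bb^+(\rho_n)$ to exit every compact subset of $\cML_{<\pi}(S^+)$, and that a curve pinched on top forces its bending weight to tend to $\pi$. Both are (contrapositives of) the Bonahon--Otal/Lecuire geometric convergence of convex pleated surfaces with bending data converging in $\cML_{<\pi}(S^+)$, and must be invoked or proved; the paper's properness proof instead first rules out algebraic divergence via Morgan--Shalen degeneration to actions on real trees, Sullivan's convex hull theorem, Otal's continuity theorem and Bridgeman's bound on the length of the bending lamination, and only then applies the pleated-surface convergence to identify the algebraic limit as quasi-Fuchsian.
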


The main result concerns certain hyperbolic structures on $S\times\R$ which are not quasi-Fuchsian, namely those obtained by partially degenerating one end but not the other. In this paper, we will always degenerate the bottom end and we will call them \emph{one-sided degenerated structures} on $S\times\R$. Typically, such degeneration occurs at the boundary of \emph{Bers slices}, i.e.\ $m^+\in\cT(S^+)$ remains fixed while $\mm^-$ exits all compact subsets of $\cT(S^-)$. One can pinch a multicurve $P$ on $S^-$, in which case rank $1$ cusps appear in the three-manifold. Individual components of the complement $S^--P$, which are cusped surfaces, can themselves be degenerated in a way that is recorded into an \emph{ending lamination}. Components of $S^-$ that stay non-degenerated form the \emph{moderate subsurface} of $S^-$, while the others form the \emph{immoderate subsurface}. Let $\cAH_o^+(S)\subset\cAH_o(S)$ denote the space of one-sided degenerated structures, i.e.\ structures $\rho\in\cAH_o(S)$ whose top end is not degenerated.

\begin{figure}[h]
	\centering
	\begin{tikzpicture}
		\node[anchor=south west] at (0,0){\includegraphics[width=0.8\textwidth]{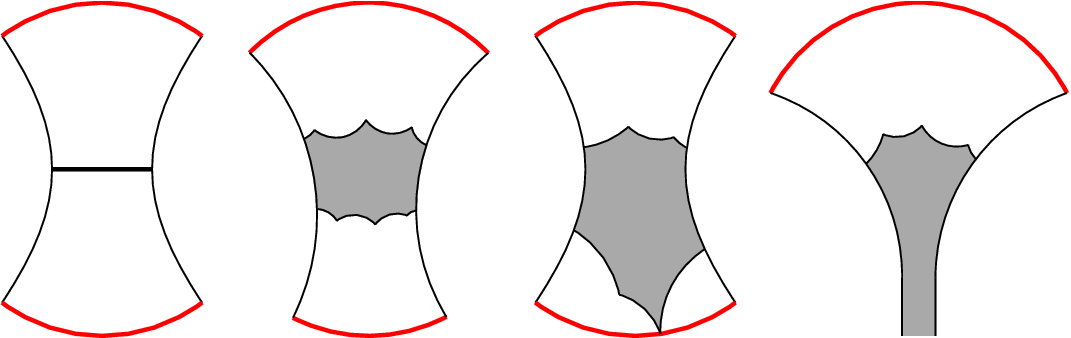}};
		\node[align=center] at (1.25,4.35) {\tiny $\partial_{\infty}^+ M$};
		\node[align=center] at (1.25,-0.1) {\tiny $\partial_{\infty}^- M$};
		\node[align=center] at (1.3,2.35) {\small $C(M)$};
		\node[align=center] at (1.35,1.9) {\small $\cong S$};
		\node[align=center] at (4.5,2.1) {\small $C(M)$};
		\node[align=center] at (4.5,2.9) {\tiny $\partial^+ C(M)$};
		\node[align=center] at (4.5,1.3) {\tiny $\partial^- C(M)$};
		\node[align=center] at (8.1,-0.12) {\small cusp};
		\node[align=center] at (11.1,-0.12) {\small degenerate end};
	\end{tikzpicture}
	\caption{Schematic representation of hyperbolic manifolds homeomorphic to $S\times\R$, with $1$-dimensional ``$S$-direction'', conformal boundary in red and convex core in grey. The structures are, from left to right: Fuchsian, quasi-Fuchsian, Kleinian with a cusp and Kleinian with simply-degenerated bottom end.}
	\label{fig:examples}
\end{figure}

By the theory of quasi-conformal deformations of Kleinian groups (Ahlfors-Bers-Kra-Marden-Maskit-Sullivan \cite[\S 7.55]{MR1638795}) together with the ending lamination theorem for Kleinian surface groups by Brock, Canary and Minsky \cite{MR2925381}, a Kleinian surface structure $\rho\in\cAH_o(S)$ is entirely determined by its \emph{end invariants} $(\EE^-(\rho),\EE^+(\rho))$, which are combinations of the data of the \emph{parabolic locus} (the simple multicurve that is pinched in $S^-\sqcup S^+$ to obtain $\partial_{\infty}\rho$), and \emph{conformal structures} at infinity or \emph{ending laminations} on each component of the complement (see \S\ref{sec:end_structures} for the notations).

In the case of one-sided degenerated structures, this says that $\rho\in\cAH_o^+(S)$ is uniquely determined by a pair $(\EE^-(\rho),\mm^+(\rho))$, where $\mm^+(\rho)$ is the top conformal structure at infinity and $\EE^-(\rho)$ is the end structure of the bottom end. A consequence of the main result of this paper is the analogous statement, with $\mm^+(\rho)$ replaced by the bending lamination on the top side of the convex core $\bb^+(\rho)$.

\begin{theorem*}
	Let $\rho\in\cAH_o^+(S)$ be a one-sided degenerated Kleinian structure on $S\times\R$, and let $M_{\rho}\coloneqq\HH^3/\rho(\pi_1(S))$. Then $\rho$ is uniquely determined by the pair $(\EE^-(\rho),\bb^+(\rho))$, i.e.\ by the bending lamination $\bb^+(\rho)$ of the top side of the convex core of $M_{\rho}$ and by the end structure $\EE^-(\rho)$ of the bottom end of $M_{\rho}$.

	More precisely, let $P\subset S^-$ be a simple multicurve, let $\Sigma^-$ be a union of connected components of $S^--P$ and let $L$ be a union of arational laminations on each components of $S^--(P\cup\Sigma^-)$. Denote by $\cAH_o^+(S;P\cup L)\subset\cAH_o(S)$ the subspace of one-sided degenerated structures with bottom end cusped along $P$ and having ending laminations $L$. Let $\cML^{\perp P\cup L}_{<\pi}(S^+)\subset\cML(S^+)$ be the subspace of measured laminations $\mu$ on $S^+$ without closed leaves of weight $\geq\pi$ and such that $\mu$ and $P\cup L$ fill $S-\Sigma^-$, i.e.\ any essential closed curve on $S$ intersects either $\mu\cup P\cup L$ or $\Sigma^-$.
	
	Consider the map
	$$\mm^-\times\bb^+\colon\cAH_o^+(S;P\cup L)\longrightarrow\cT(\Sigma^-)\times\cML^{\perp P\cup L}_{<\pi}(S^+)$$
	sending a structure $\rho$ to the pair $(\mm^-(\rho),\bb^+(\rho))$ of the conformal structure at infinity of the moderate subsurface of the bottom end and the bending lamination of the top side of the convex core. Then this map is a homeomorphism.
\end{theorem*}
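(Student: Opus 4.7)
The plan is to reduce this to the quasi-Fuchsian result (Theorem \ref{thm:prescribed_metric_and_bending}) by approximating one-sided degenerated structures by quasi-Fuchsian ones. Continuity of $\mm^-\times\bb^+$ on $\cAH_o^+(S;P\cup L)$ is standard: the conformal structure on the moderate subsurface varies continuously with the representation, and the bending lamination is continuous on the top side because this end remains non-degenerate throughout $\cAH_o^+(S;P\cup L)$, so one can apply the usual continuity results of \cite{MR1331998,MR2464096} to the top boundary of the convex core.

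For injectivity, suppose $\rho_1,\rho_2\in\cAH_o^+(S;P\cup L)$ satisfy $\mm^-(\rho_1)=\mm^-(\rho_2)$ and $\bb^+(\rho_1)=\bb^+(\rho_2)=:\mu_0$. They share the same bottom end structure $\EE^-$, so by the ending lamination theorem it suffices to show $\mm^+(\rho_1)=\mm^+(\rho_2)$. Using continuity of the map from end invariants to $\cAH_o(S)$, I would pick a single sequence $\EE_n^-\in\cT(S^-)$ of non-degenerated conformal structures converging to $\EE^-$, and consider the two quasi-Fuchsian approximations $\sigma_n^i:=\rho(\EE_n^-,\mm^+(\rho_i))$. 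These satisfy $\sigma_n^i\to\rho_i$ in $\cAH_o$ and $\bb^+(\sigma_n^i)\to\mu_0$ by continuity. Both sequences have $\mm^-(\sigma_n^i)=\EE_n^-$ and their bending laminations converge to the same $\mu_0$. Applying the homeomorphism of Theorem \ref{thm:prescribed_metric_and_bending}, the preimages of two sequences sharing the same $\mm^-$ and with $\bb^+$ components becoming arbitrarily close must themselves become arbitrarily close in $\cQF_o(S)$. Since $\sigma_n^1,\sigma_n^2$ both converge, their limits $\rho_1$ and $\rho_2$ must coincide, forcing $\mm^+(\rho_1)=\mm^+(\rho_2)$.

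For surjectivity, given a target $(\mm_0^-,\mu_0)\in\cT(\Sigma^-)\times\cML^{\perp P\cup L}_{<\pi}(S^+)$, I would choose $\EE_n^-\in\cT(S^-)$ with $\EE_n^-\to(P,L,\mm_0^-)$ in the sense of end-invariant convergence, and take $\sigma_n\in\cQF_o(S)$ to be the unique quasi-Fuchsian structure (granted by Theorem \ref{thm:prescribed_metric_and_bending}) with $\mm^-(\sigma_n)=\EE_n^-$ and $\bb^+(\sigma_n)=\mu_0$. The filling/weight condition on $\mu_0$ ensures admissibility for $n$ large. One then extracts a limit $\rho=\lim\sigma_n\in\cAH_o(S)$ and identifies it as the desired structure in $\cAH_o^+(S;P\cup L)$ with prescribed invariants. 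Once bijectivity and continuity are established, properness follows from the properness part of the Bonahon-Otal theorem \cite{MR2144972} combined with the continuity of the inverse on $\cQF_o(S)$, upgrading the bijection to a homeomorphism.

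The main obstacle will be the compactness/convergence analysis underlying both the injectivity step and the surjectivity step: one must rule out that the QF approximations $\sigma_n$ could develop extra degeneration (beyond the prescribed $P$ and $L$), or conversely fail to realize the prescribed cusps and ending laminations in the limit, while the bending on the top side stays bounded and equal to $\mu_0$. The delicacy is that $\mm^-(\sigma_n)$ is exiting every compact subset of $\cT(S^-)$, so one must exploit the control provided by the bounded bending $\mu_0$ (through the machinery of pleated surfaces and the properness results of \cite{MR2144972}) combined with the ending lamination theorem \cite{MR2925381} to pinpoint precisely which boundary stratum of $\cAH_o$ the limit lies in. This compactness step should parallel, but substantially refine, the variational and properness arguments employed in \cite{dularschlenker2024pleating}.
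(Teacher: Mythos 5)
Your injectivity step contains a genuine gap, and it is exactly at the crux of the whole problem. You set $\sigma_n^i=\rho(\EE_n^-,\mm^+(\rho_i))$ and argue: since $\mm^-(\sigma_n^1)=\mm^-(\sigma_n^2)=\EE_n^-$ and $\bb^+(\sigma_n^1),\bb^+(\sigma_n^2)\to\mu_0$, the homeomorphism $\mm^-\times\bb^+$ on $\cQF_o(S)$ forces $\sigma_n^1$ and $\sigma_n^2$ to become arbitrarily close. This inference is invalid. Continuity of the inverse of a homeomorphism lets you conclude $f^{-1}(y_n)\to f^{-1}(y)$ only when $y_n$ converges \emph{inside the target}; here the points $(\EE_n^-,\bb^+(\sigma_n^i))$ leave every compact subset of $\cT(S^-)\times\cML_{<\pi}(S^+)$ because $\EE_n^-$ diverges in $\cT(S^-)$. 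The modulus of continuity of $(\mm^-\times\bb^+)^{-1}$ degenerates as one approaches the boundary of the parameter space, so two sequences whose images merge in the $\bb^+$-coordinate can perfectly well converge to distinct limits $\rho_1\neq\rho_2$. Note that if your argument were sound, the same two lines would prove that \emph{every} boundary stratum of $\cQF_o(S)$ is parameterized by bending data, which the paper explicitly states is open (Section ``Further directions''). The paper's actual injectivity proof is designed precisely to overcome this: it fixes the bottom end structure $\cE^-$, realizes $\bb^+_\infty\colon\cT(S^+)\to\cML(S^+)$ as the \emph{continuous} limit of the open embeddings $\bb^+_t$ along a ray $r(t)\to\cE^-$ in $\cES(S^-)$, proves that the fibres of $\bb^+_\infty$ are compact (properness) and real-analytic (via Bonahon's shear-bend coordinates), applies a variant of Finney's theorem to conclude the fibres are weakly contractible ANRs, and then invokes the mod-$2$ fundamental class of compact real-analytic spaces to force the fibres to be points. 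None of that machinery is dispensable, and your proposal supplies no substitute for it.

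Two secondary issues. First, properness does not ``follow from the properness part of the Bonahon--Otal theorem combined with the continuity of the inverse on $\cQF_o(S)$'': Bonahon--Otal treat $\bb^-\times\bb^+$ on geometrically finite structures, whereas here one must handle $\mm^-\times\bb^+$ on $\cAH_o^+(S;e^-)$, which the paper does via Morgan--Shalen degenerations to real trees, Otal's continuity theorem, Bridgeman's bound on the length of the bending lamination, and the Anderson--Lecuire and Bonahon--Otal--Lecuire convergence results for the ends. You correctly flag this as delicate but do not carry it out. Second, your separate surjectivity argument is unnecessary once continuity, properness and injectivity are in hand: invariance of domain between manifolds of equal dimension (using that $\cML^{\perp P\cup L}_{<\pi}(S^+)$ is a manifold of the right dimension) yields that the map is an open embedding, and a proper open embedding into a connected space is onto.
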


Recently, an explicit description of the combination of ending laminations and bending laminations that can occur for general Kleinian surface groups has been given by Baba and Ohshika in \cite{MR4651897}.

\subsection{Outline of the proof}\label{sec:outline}
Fix a simple multicurve $P\subset S^-$ and a union $L$ of arational laminations in some connected components of $S^--P$. Let $e^-=(P,L)$ and recall that $\cAH_o^+(S;e^-)$ denotes the space of hyperbolic structures on $S\times\R$ with non-degenerated top end, and with parabolic locus $P$ and ending lamination $L$ in the bottom end. We need to prove that the map
$$\mm^-\times\bb^+\colon\cAH_o^+(S;P\cup L)\to\cT(\Sigma^-)\times\cML_{<\pi}^{\perp P\cup L}(S^+)$$
is continuous, proper and injective. This implies that it is a homeomorphism by invariance of domain, since both sides are topological manifolds of the same dimension.

First, we prove continuity in Section \ref{sec:continuity}, see Proposition \ref{prop:continuity_bending_map}. Then, in section \ref{sec:properness} we prove properness. The argument follows closely the cases considered by Bonahon-Otal \cite{MR2144972}, Lecuire \cite{MR2207784}, Anderson-Lecuire \cite{MR3134412}, Baba-Ohshika \cite{MR4651897} and Lecuire \cite{lecuire2025propernessbendingmap}. The first step is to prove that any sequence of representations with convergent image under $\mm^-\times\bb^+$ must have an algebraically convergent subsequence, using the theory of degenerescence of metrics towards actions on real trees of Morgan-Shalen \cite{MR769158,MR1402300}. Then, it follows from results of Anderson-Lecuire \cite{MR3134412} and Bonahon-Otal and Lecuire \cite{MR2144972,MR2207784} that this algebraic limit must lie in $\cAH_o^+(S;e^-)$.

To prove injectivity, we apply an argument similar to the proof of the injectivity of $\bb^-\times\bb^+$ for convex-cocompact structures \cite{dularschlenker2024pleating}. More precisely, we first fix a point $m^-\in\cT(\Sigma^-)$ and focus on the slice $\cAH_o^{+}(S;\cE^-)\subset\cAH_o^+(S;e^-)$ of structures $\rho$ with end structure $\EE^-(\rho)=\cE^-$ consisting of $e^-=P\cup L$ and $m^-\in\cT(\Sigma^-)$. We then show that the map
$$\bb^+_{\infty}\colon\cAH_o^{+}(S;\cE^-)\to\cML_{<\pi}^{\perp e^-}(S^+)$$
is injective, by exhibiting it as a limit of homeomorphisms $\bb^{+}_{t}$, $t\in [0,\infty)$. Combined with the fact that $\bb^+_{\infty}$ has compact real-analytic fibres and using a theorem of Finney \cite{MR0224087}, this implies that $\bb^+_{\infty}$ is injective.

Consider a ray $r\colon [0,\infty)\to\cT(S^-)$ such that $\lim_{t\to\infty}r(t)=\cE^-$ in the space of \emph{end structures} $\cES(S^-)$ (called space of \emph{gallimaufries} in \cite{MR3134412}, see Section \ref{sec:convergence_end_structures} for a description of its topology), in other words, $r$ extends to a path $\overline{r}\colon [0,\infty]\to\cES(S^-)$ with $\overline{r}(\infty)=\cE^-$. The objects and maps involved in the proof are summarized in the diagram below, where $\overline{\UU}$ denotes the extension of $\UU$ to the space of end structures.

\[\begin{tikzcd}
	{[0,\infty)\times\cT(S^+)} & {\cT(S^-)\times\cT(S^+)} & {\cQF_o(S)} & {\cML(S^-)\times\cML(S^+)} \\
	{[0,\infty]\times\cT(S^+)} & {\cES(S^-)\times\cT(S^+)} & {\cAH_o^+(S)} & {\cML(S^+)}
	\arrow["{r\times\rm{incl}}", from=1-1, to=1-2]
	\arrow[hook, from=1-1, to=2-1]
	\arrow["\UU", from=1-2, to=1-3]
	\arrow["{\bb^+\times\bb^-}", hook, from=1-3, to=1-4]
	\arrow[hook, from=1-3, to=2-3]
	\arrow["{\rm{proj}_2}", from=1-4, to=2-4]
	\arrow["{\overline{r}\times\rm{incl}}"', from=2-1, to=2-2]
	\arrow["{\BB^+}"{description}, bend right=20, from=2-1, to=2-4]
	\arrow["{\overline{\UU}}"', from=2-2, to=2-3]
	\arrow["{\bb^+}"', from=2-3, to=2-4]
\end{tikzcd}\]

Let $\BB^+\colon [0,\infty]\times\cT(S^+)\to\cML(S^+)$ be the composition $\BB^+\coloneqq \bb^+\circ\,\overline{\UU}\circ (\overline{r}\times\rm{incl})$, and for each $t\in [0,\infty]$, write $\bb_t^+\coloneqq\BB^+(t,\cdot)\colon\cT(S^+)\to\cML(S^+)$.

To show that $\bb_{\infty}^+$ is injective, we need the following ingredients~:
\begin{enumerate}[label=\Roman*]
	\item\label{item:continuity} The map $\BB^+$ is continuous (Corollary \ref{cor:continuity}), hence $\bb^+_{\infty}$ is the \emph{continuous} limit of $\bb^+_{t}$ when $t\to\infty$~;
	\item\label{item:compact_fibres} The map $\bb^+_{\infty}$ is proper (\S \ref{sec:properness})~;
	\item\label{item:real_analycity} The map $\bb^+_{\infty}$ has real-analytic fibres (\S \ref{sec:analyticity})~;
	\item\label{item:injectivity} For each $t\in [0,\infty)$, the map $\bb^+_t$ is injective (by invariance of domain, it is then an open embedding) (\S \ref{sec:injectivity}).
\end{enumerate}
Combining \eqref{item:continuity} and \eqref{item:injectivity}, Theorem \ref{thm:fibres_are_weakly_contractible}, which is a variant of Finney's theorem \cite{MR0224087,dularschlenker2024pleating}, implies that the compact "topologically nice" fibres of $\bb^+_{\infty}$ are contractible. Any fibre $F$ of $\bb^+_{\infty}$ is compact by \eqref{item:compact_fibres} and "nice" by \eqref{item:real_analycity}. All in all, $F$ is a  contractible compact real-analytic space. But a compact real-analytic space $F$ has a fundamental class modulo $2$ \cite{MR149503,MR278333}, i.e.\ a nonzero homology class in $H_{\dim F}(F;\Z/2)$. Since $F$ is contractible, we must have $\dim F=0$, which means that $F$ is a point. This shows that $\bb^+_{\infty}$ is injective.

In Section \ref{sec:background} the necessary background is recalled. Sections \ref{sec:continuity}-\ref{sec:injectivity} are devoted to proving the above four ingredients, with the conclusion of the proof of the main result at the end of Section \ref{sec:injectivity}.

\subsection{Further directions}
A natural question is whether \emph{any} structure in $\cAH_o(S)$ is uniquely determined by the combination of its bending and ending laminations. The main theorem of this paper implies that it is the case when the top side is non-degenerated and the bottom moderate subsurface has no quasi-conformal deformations, i.e.\ $\Sigma^-$ is empty or a union of thrice-punctured spheres. But this is still far from a complete answer.

The essential difficulty lies in proving that any \emph{geometrically finite} structure on $S\times\R$ is uniquely determined by its pair of bending laminations. This is expected to hold, and it was proved in the case of rational bending laminations by Bonahon and Otal \cite{MR2144972}. But it is not yet known in the general case, where the current strategy needs to be refined.

If geometrically finite structures are proven to be parameterized by their bending laminations, then one can prove that the same holds for all structures in $\cAH_o(S)$. Indeed, any structure in $\cAH_o(S)$ can be obtained as a limit of geometrically finite structures with the same parabolic locus, and the same argument as the one of the present main result can be applied, combined with the properness result of Baba and Ohshika \cite{MR4651897}.

\subsection*{Acknowledgements}
I wish to thank Tommaso Cremaschi, Cyril Lecuire and Jean-Marc Schlenker for helpful discussions. I was supported by FNR AFR grant 18890152.

\section{Background}\label{sec:background}
The main references used are the books of Matsuzaki-Taniguchi \cite{MR1638795}, Canary-Epstein-Marden \cite{MR2230672} and Marden \cite{MR3586015}.

Unless stated otherwise, $S=S^+$ denotes a closed oriented surface of genus $\geq 2$, and $S^-$ is the same surface with reversed orientation. Let $\Gamma\coloneqq\pi_1(S)$. The space of representations of $\Gamma$ into $\PSL(2,\C)\cong\Isom^+(\HH^3)$, modulo conjugation, is denoted $\cX(S)$. Let $\cAH(S)\subset\cX(S)$ denote the subspace of discrete and faithful representations. Any $\rho\in\cAH(S)$ determines a hyperbolic manifold $M_{\rho}\coloneqq\HH^3/\rho(\Gamma)$, which is homeomorphic to $S\times\R$ by Bonahon's tameness theorem \cite{MR847953}. Thus $\cAH(S)$ is also called the \emph{algebraic deformation space of marked hyperbolic structures on $S\times\R$}.

\begin{remark}\label{rmk:orientation}
	Let $\rho\in\cAH(S)$. Under the homeomorphism $M_{\rho}\cong S\times\R$, $M_{\rho}$ has the same orientation as $S^+\times\R$, or the reversed orientation. In fact, $\cAH(S)$ has two connected components, characterized by this property \cite[\S 5.2.1]{MR3586015}. The \emph{orientation-preserving component} of $\cAH(S)$, denoted $\cAH_o(S)$ is the component consisting of those $\rho\in\cAH(S)$ for which $M_{\rho}$ is homeomorphic to $S^+\times\R$ in an orientation-preserving way. When we speak of the \emph{bottom} and \emph{top} ends of a Kleinian surface manifold, we implicitely restrict our attention to elements of the orientation-preserving component.
\end{remark}

Given $\rho\in\cAH(S)$ and $c\subset S$ a curve on $S$, we denote by $\ell_{\rho}(c)$ the length of a geodesic representative of $c$ inside $M_{\rho}$, or zero if $c$ has no such geodesic representative. Equivalently, if $\gamma\in\pi_1(S)$ represents $c$, then $\ell_{\rho}(c)$ is the translation distance of the isometry $\rho(\gamma)$ on $\HH^3$.

\subsection{Laminations and trees}\label{sec:laminations_and_trees}
Let $\cGL(S)$ be the space of \emph{geodesic laminations} on $S$, i.e.\ closed subsets that are the union of disjoint simple geodesics (its \emph{leaves}), endowed with the Chabauty topology (which coincides with the Hausdorff topology and geometric topology \cite[\S I.3.1]{MR2230672}). A geodesic lamination is said to be \emph{minimal} or \emph{maximal} when it is so with respect to inclusion (among non-empty laminations). A minimal geodesic lamination is either a simple closed curve or it is irrational, in which case it contains uncountably many geodesics and any half-leaf is dense in it. A leaf of a geodesic lamination $L$ is \emph{recurrent} if it is contained in a minimal sublamination. A geodesic lamination is \emph{recurrent} if all its leaves are. A geodesic lamination is \emph{rational} if it is a simple multicurve, i.e.\ a finite union of simple closed curves, and \emph{arational} if it is minimal and maximal (in which case it is irrational and each complement of its complement is either simply-connected or an annulus around a cusp). The space $\cGL(S)$ is compact \cite[I.4.1.7]{MR2230672}.

A \emph{transverse measure} on a geodesic lamination $L\in\cGL(S)$ is a map $\mu$ associating to each closed arc transverse to $L$ a number $\mu(k)\in\R_{\geq 0}$ (also denoted $i(\mu,k)$) which is invariant under transverse isotopy of arcs, countably additive, and with full support, i.e.\ $\mu(k)>0$ if and only if $k\cap L\not =\emptyset$. Let $\cML(S)$ denote the space of measured lamination on $S$, endowed with the weak topology. The geodesic laminations that admit a transverse measure (with full support) are precisely the recurrent laminations \cite[\S 1.7]{MR1144770}. A minimal geodesic lamination admits a unique transverse measure up to scaling. See \cite{MR1144770} for more details on measured laminations, and a description of a piecewise linear structure on $\cML(S)$ using train tracks. By Thurston earthquake theorem \cite{MR903860}, the space $\cML(S)$ is homeomorphic to $\cT(S)$. See also \cite{MR2230672}, \cite[Appendix]{MR1402300}, \cite[\S 6.1]{MR1638795}.

Denote by $i\colon\cML(S)\times\cML(S)\to\R_{\geq 0}$ the \emph{intersection form} between measured laminations, which continuously extends both the intersection number between weighted multicurve and the measure of transverse arcs.

To a measured lamination $\mu$ on $S$ one can associate its dual \emph{real tree} $\cT_{\mu}$, together with an action $\rho$ of $\pi_1(S)$ on $\cT_{\mu}$ by isometries \cite[\S 8.2]{MR1402300}. For $\gamma\in\pi_1(S)$, the translation distance of $c$ on $\cT_{\mu}$ is precisely $i(\mu,c)$. A sequence of representations $(\rho_n)_{n\in\N}\subset\cAH(S)$ is said to \emph{converge} to an action $\rho$ of $\pi_1(S)$ on a real tree $\cT_{\mu}$ if there is a sequence of positive real numbers $\epsilon\searrow 0$ such that $\epsilon_n\ell_{\rho_n}(\gamma)\to i(\mu,\gamma)$ for all $\gamma\in\pi_1(S)$.

Morgan and Shalen proved that actions on real trees arise naturally as limits of divergent sequences of discrete and faithful representations of $\pi_1(S)$ in $\Isom^+(\HH^3)$, as stated below \cite{MR769158}. The fact that real trees arising in this way are in fact dual to measured laminations is due to Skora \cite{MR1339846}. See \cite{MR1402300} for a detailed exposition.

\begin{theorem}[Morgan-Shalen \cite{MR769158,MR1339846,MR1402300}]\label{thm:Morgan-Shalen}
	Let $(\rho_n)_{n\in\N}$ be a sequence in $\cAH(S)$. Then, up to extracting a subsequence, either~:
	\begin{enumerate}
		\item The sequence $(\rho_n)_n$ converges in $\cAH(S)$, or~;
		\item The sequence $(\rho_n)_n$ converges to an action $\rho$ of $\pi_1(S)$ on a real tree $\cT_{\mu}$, where $\mu$ is a measured lamination on $S$.
	\end{enumerate}
\end{theorem}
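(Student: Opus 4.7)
I follow the Morgan–Shalen strategy via length functions, combined with Skora's theorem identifying minimal small actions of surface groups on $\R$-trees with measured laminations. Fix a finite generating set $F\subset\Gamma$ and set $\lambda_n\coloneqq\max_{\gamma\in F}\ell_{\rho_n}(\gamma)$. Up to extracting a subsequence, either $(\lambda_n)_n$ stays bounded or $\lambda_n\to\infty$; these two regimes will yield conclusions (1) and (2) respectively.

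\textbf{Case 1: $(\lambda_n)_n$ bounded.} From $2\cosh(\ell_{\rho_n}(\gamma)/2)\leq|\tr\rho_n(\gamma)|+2$ for loxodromic elements (and the bound $|\tr|\leq 2$ for parabolics), the traces $\tr\rho_n(\gamma)$ remain bounded for each $\gamma\in F$. Since finitely many trace functions embed the character variety as a closed subset of an affine space (Culler–Shalen), one extracts a subsequence such that $\rho_n$ converges, up to conjugation, to some $\rho\colon\Gamma\to\PSL(2,\C)$. Chuckrow's theorem, based on Jørgensen's inequality, ensures that $\rho$ is discrete. Faithfulness also follows from Jørgensen's inequality: if $\rho(\gamma_0)=1$ for some $\gamma_0\neq 1$, then $\tr^2\rho_n(\gamma_0)\to 4$, and for any $\delta$ with $\langle\gamma_0,\delta\rangle$ non-elementary (which exists since $\Gamma$ is not virtually abelian), Jørgensen would force $|\tr\,[\rho_n(\gamma_0),\rho_n(\delta)]-2|\geq 1$ for all $n$, contradicting $\tr\,[\rho_n(\gamma_0),\rho_n(\delta)]\to 2$. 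Thus $\rho\in\cAH(S)$ and (1) holds.

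\textbf{Case 2: $\lambda_n\to\infty$.} Set $\epsilon_n\coloneqq 1/\lambda_n\to 0$. The rescaled functions $\epsilon_n\ell_{\rho_n}$ are uniformly bounded on $F$ and attain value $1$ somewhere on $F$, so by a diagonal extraction they subconverge pointwise on $\Gamma$ to some non-zero function $\ell_\infty$. To realize $\ell_\infty$ as a translation length function on a tree, invoke the Bestvina–Paulin construction: pick basepoints $x_n$ so that the injectivity radius of $(\HH^3,\epsilon_n d_{\HH^3},x_n)/\rho_n(\Gamma)$ is bounded below, and take an equivariant Gromov–Hausdorff limit of the pointed metric spaces $(\HH^3,\epsilon_n d_{\HH^3},x_n)$ with the $\Gamma$-action via $\rho_n$. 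Along a subsequence, one obtains a $\Gamma$-action $\rho_\infty$ on a real tree $T$ with translation length function $\ell_\infty$. The Margulis lemma applied to the rescaled Kleinian manifolds shows that any subgroup of $\Gamma$ fixing a non-trivial arc of $T$ is virtually abelian, hence in $\Gamma$ is trivial or maximal cyclic, i.e.\ the action on $T$ is \emph{small}.

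\textbf{Conclusion via Skora.} Replacing $T$ by its minimal invariant subtree, the $\Gamma$-action on $T$ is minimal, non-trivial and small. Skora's theorem then produces an equivariant isometry $T\cong\cT_\mu$ for some $\mu\in\cML(S)$, giving $\ell_\infty(\gamma)=i(\mu,\gamma)$ for all $\gamma$, which is exactly the notion of convergence to $\cT_\mu$ defined earlier. The main obstacles are (i) controlling arc stabilizers in the Gromov–Hausdorff limit via the Margulis lemma—one needs to rule out non-cyclic stabilizers by showing that two independent generators would have to realize simultaneously short non-commuting isometries in $(\HH^3,\epsilon_n d_{\HH^3})$, contradicting the thickness of a suitably chosen basepoint region—and (ii) invoking Skora's theorem itself, the geometric heart of the statement, which is far from elementary and requires the theory of measured foliations on surfaces.
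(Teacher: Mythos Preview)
The paper does not prove this theorem; it is stated in the background section (\S\ref{sec:laminations_and_trees}) as a known result, with attribution to Morgan--Shalen for the dichotomy, Skora for the identification of the limiting tree with one dual to a measured lamination, and Otal's book for a detailed exposition. There is therefore no ``paper's own proof'' to compare against.

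Your sketch is broadly correct and follows the standard route through these references: the bounded/unbounded dichotomy on translation lengths, algebraic convergence plus Chuckrow--J{\o}rgensen in the bounded case, and the Bestvina--Paulin rescaling construction followed by Skora's theorem in the unbounded case. One small slip: in Case~1 your trace inequality is stated in the wrong direction for the use you make of it. For a loxodromic element with real translation length $\ell$ one has $|\tr|\le 2\cosh(\ell/2)$, which is what gives ``bounded $\ell$ $\Rightarrow$ bounded traces''; the inequality you wrote would only yield the converse. This does not affect the overall argument.
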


A measured lamination $\alpha$ is said to be \emph{realized} in the real tree $\cT_{\mu}$ if all its leaves intersect $\mu$ transversely \cite[\S 3.1]{MR1402300}. See \cite{MR769158}, \cite{MR1339846} and \cite{MR1402300} for more details.

\subsection{Pleated surfaces}\label{sec:pleated_surfaces}
Let $M$ be a hyperbolic three-manifold, $S$ a complete hyperbolic surface (not necessarily compact) and $L$ a geodesic lamination on $S$. A \emph{pleated surface with pleating locus} $L$ is a continuous map $f\colon S\to M$ which is a totally geodesic immersion on $S-L$ and on $L$. In general, the \emph{flat pieces} of $S-L$ are bent along the pleating locus, with an angle recorded into a \emph{bending cocycle} $\beta$ which is an element of the space $\cH(L;\R/2\pi\Z)$ of $\R/2\pi\Z$-valued cocycles along $L$ \cite{MR1413855} (which is a torus whose dimension is determined by $S$ and $L$). When $f$ is locally convex, this bending cocycle defines a transverse measure with support contained in $L$, called the \emph{bending} or \emph{pleating (measured) lamination} of $f$. The induced path metric on $S$ inside $M$ is hyperbolic and defines a point $h\in\cT(S)$, which can also be encoded in a \emph{shearing cocycle} $\sigma\in\cH(L;\R)$. The boundary of the convex core $C(\rho)$ of $\rho\in\cAH(S)$ is an embedded convex pleated surface \cite[II.1.11,12]{MR2230672}. In particular, its bending measured lamination has weight $<\pi$ along closed leaves.

Now, let $L$ be a \emph{maximal} geodesic lamination. Let $\rho\in\cAH(S)$. If there exists a pleated surface inside $M_{\rho}$ with pleating locus $L$ (we say that $L$ is \emph{realized} in $M_{\rho}$ or by $\rho$), then it is unique and the \emph{shear-bend cocycle} provides a parameterization of such representations. More generally, one can consider \emph{abstract pleated surfaces} $(\widetilde{f},\pi_1(S),\rho)$ where $\widetilde{f}\colon\widetilde{S}\to\HH^3$ is a pleated surface inside $\HH^3$ which is equivariant with respect to the (not necessarily discrete) representation $\rho\in\cX(S)$. Given a maximal geodesic lamination, let $\cR(L)$ denote the open subspace of $\cX(S)$ consisting of those representations \emph{realizing} $L$.

\begin{theorem}[{Bonahon \cite[Theorem D]{MR1413855}}]\label{thm:shear_bend_bonahon}
	Let $L$ be a maximal geodesic lamination. The map $\cR(L)\to\cH(L;\C/2\pi i\Z)$ sending a representation $\rho$ realizing $L$ to its shear-bend cocycle $\sigma(\rho)+i\beta(\rho)$ is a biholomorphic homeomorphism onto an open subset of $\cH(L;\C/2\pi i\Z)$.
\end{theorem}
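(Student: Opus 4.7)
The plan is to construct the shear-bend cocycle explicitly from the geometry of a pleated surface realizing $L$, verify holomorphicity by a local analysis, and establish that the assignment is a local biholomorphism by reconstructing a pleated surface from prescribed cocycle data.

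First, I would define $\sigma(\rho)+i\beta(\rho)$ as follows. Given $\rho\in\cR(L)$, consider the associated equivariant pleated surface $\widetilde{f}\colon\widetilde{S}\to\HH^3$; each plaque (component of $\widetilde{S}\setminus\widetilde{L}$) is mapped to a totally geodesic plane, and maximality of $L$ forces every plaque to be an ideal triangle bounded by three leaves. For a transverse arc $k$ crossing a single leaf of $\widetilde{L}$, the two adjacent image planes span two geodesic lines in $\HH^3$ whose \emph{complex distance} (mod $2\pi i$) simultaneously records the shear (real part) and the bending angle (imaginary part) along that leaf. For a longer arc one aggregates these elementary contributions; when $k$ meets an irrational sublamination of $L$ it crosses uncountably many leaves, so the definition requires a limiting/integration procedure analogous to the one used to define transverse measures. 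The key analytic point is the convergence of the resulting total complex shear-bend; one then checks the cocycle conditions (additivity under concatenation of arcs and invariance under transverse homotopy) using the equivariance of $\widetilde{f}$.

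Next, holomorphicity follows because each plaque's position in $\HH^3$ depends holomorphically on $\rho$, and the complex distance between two geodesics depends holomorphically on their defining data; the shear-bend cocycle along a fixed transverse arc is then a locally uniform limit of holomorphic functions. To prove the map is a local biholomorphism onto an open subset, I would construct an explicit local inverse: given a cocycle $\sigma+i\beta$ close to $\sigma(\rho_0)+i\beta(\rho_0)$, fix a base plaque in $\HH^3$ and use the prescribed shears and bendings to successively place every other plaque, extending to an equivariant map $\widetilde{f}'\colon\widetilde{S}\to\HH^3$ whose holonomy defines a representation $\rho'\in\cR(L)$ realizing $L$. The cocycle conditions guarantee that this placement is consistent with respect to different combinatorial paths in the dual graph of plaques, and equivariance of $\widetilde{f}'$ under $\pi_1(S)$ follows from the same cocycle relations applied to deck transformations.

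The main obstacle will be the convergence and well-definedness of the complex shear-bend cocycle along an arc meeting an irrational component of $L$: this requires an integration theory for transverse cocycles valued in the non-abelian complex-translation group $\C/2\pi i\Z$, together with sharp geometric estimates ensuring that the contributions of infinitely many leaves sum to a finite class independent of the chosen subdivision. These estimates will have to exploit the finiteness of the bending on $L$ and the exponential contraction of the hyperbolic metric transverse to a geodesic lamination. Once this analytical groundwork is secured, global injectivity on the connected open set $\cR(L)$ and openness of the image follow from the local biholomorphism property together with standard analytic-continuation arguments.
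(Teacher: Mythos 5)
This statement is Bonahon's Theorem D, which the paper quotes as background and does not prove; there is therefore no internal proof to compare against, and your proposal has to be judged as a sketch of Bonahon's own argument. In broad architecture it is faithful to that argument: the cocycle is indeed built from elementary complex shear-bend contributions between adjacent ideal-triangle plaques, the hard analytic content is the convergence of these contributions across the uncountably many leaves met by an arc crossing an irrational sublamination (Bonahon's ``divergence radius'' estimates, which give exponential decay of the contribution of leaves that fellow-travel for a long time), and the inverse is obtained by reconstructing the pleated surface plaque by plaque from prescribed cocycle data. You correctly identify the convergence question as the crux, though you leave it entirely open --- that analysis is most of Bonahon's paper, so as written this is an outline rather than a proof.

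There is, however, one genuine logical gap beyond the deferred estimates: you assert that global injectivity on $\cR(L)$ ``follows from the local biholomorphism property together with standard analytic-continuation arguments.'' This implication is false; a local biholomorphism between connected complex manifolds need not be injective (covering maps, the exponential map). In the actual argument injectivity is not deduced from local invertibility at all: it comes from the fact that the shear-bend cocycle determines the equivariant pleated surface $\widetilde{f}$ --- and hence the holonomy $\rho$ --- up to post-composition by an isometry of $\HH^3$, i.e.\ up to conjugation. Your plaque-placement reconstruction, if carried out globally rather than only for cocycles near a given $\sigma(\rho_0)+i\beta(\rho_0)$, is exactly this injectivity argument; you should present it as a two-sided inverse on the image rather than as a local inverse supplemented by analytic continuation. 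A small further correction: $\C/2\pi i\Z$ is abelian, so there is no ``non-abelian integration theory'' needed for the target; the non-commutativity you are sensing lives in the compositions of elementary isometries of $\HH^3$ used to define the total shear-bend, not in the group where the cocycle takes its values.
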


Using pleated surface one can also speak of the length of a \emph{realizable} measured lamination in a Kleinian surface manifold. Consider $\lambda\in\cML(S)$ and $\rho\in\cAH(S)$. If $\rho\in\cR(|\lambda|)$, i.e.\ $M_{\rho}$ contains a pleated surface $P$ pleated along the support $|\lambda|$ of $\lambda$, then the length $\ell_{\rho}(\lambda)$ is defined as the length of $\lambda$ on $P$. When $\lambda$ is a closed curve, this coincides with the length of a geodesic representative. The following was stated by Thurston and proved by Brock.

\begin{theorem}[Brock \cite{MR1791139}]\label{thm:brock_continuity}
	The length function $(\rho,\lambda)\mapsto\ell_{\rho}(\lambda)$ is continuous on the open subset of $\cAH(S)\times\cML(S)$ consisting of pairs $(\rho,\lambda)$ such that $\rho\in\cR(|\lambda|)$.
\end{theorem}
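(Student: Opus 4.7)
The plan is to leverage Bonahon's shear-bend parameterization (Theorem \ref{thm:shear_bend_bonahon}) to produce a continuously varying family of pleated surfaces near a given base point, and thereby reduce joint continuity of $(\rho,\lambda)\mapsto\ell_\rho(\lambda)$ to the known continuity of the length function on $\cT(S)\times\cML(S)$.

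Fix $(\rho_0,\lambda_0)$ with $\rho_0\in\cR(|\lambda_0|)$, and let $f_0\colon S\to M_{\rho_0}$ be a pleated surface with pleating locus $|\lambda_0|$ and induced hyperbolic metric $h_0\in\cT(S)$. Since $f_0$ is totally geodesic on each complementary region of $|\lambda_0|$, one can extend $|\lambda_0|$ to a maximal geodesic lamination $L$ by adjoining finitely many simple $h_0$-geodesics inside these flat pieces; the same map $f_0$ is then a pleated surface with pleating locus $L$, with bending measure still supported on $|\lambda_0|$, so $\rho_0\in\cR(L)$. Theorem \ref{thm:shear_bend_bonahon} then supplies an open neighborhood $U\subset\cAH(S)$ of $\rho_0$ together with a continuously (in fact real-analytically) varying family of pleated surfaces $f_\rho\colon S\to M_\rho$ realizing $L$, hence a continuous assignment $\rho\mapsto h_\rho\in\cT(S)$.

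For any $\lambda\in\cML(S)$ with $|\lambda|\subset L$, the pleated surface $f_\rho$ sends the leaves of $\lambda$ to geodesics of $M_\rho$, so $\ell_\rho(\lambda)=\ell_{h_\rho}(\lambda)$. Joint continuity of the length function on $\cT(S)\times\cML(S)$ then handles continuity at $(\rho_0,\lambda_0)$ for pairs $(\rho,\lambda)$ constrained by $|\lambda|\subset L$. For the general case I would establish the two semi-continuity estimates separately. Upper semi-continuity follows from the standard fact that a pleated surface is $1$-Lipschitz, yielding $\ell_\rho(\mu)\leq\ell_{h_\rho}(\mu)$ for every measured lamination $\mu$ whenever $\rho\in U$; combined with $\ell_{\rho_0}(\lambda_0)=\ell_{h_0}(\lambda_0)$ at the base point, this gives $\limsup\ell_\rho(\lambda)\leq\ell_{\rho_0}(\lambda_0)$. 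For lower semi-continuity I would approximate $\lambda$ weakly by weighted simple multicurves $\lambda_n$, using that $\ell_\rho(c)$ is the continuous translation length of $\rho(c)$ for a closed curve $c$, and invoking the identity $\ell_\rho(\lambda)=\lim_n\ell_\rho(\lambda_n)$ whenever $\rho\in\cR(|\lambda|)$.

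The main obstacle I anticipate is making the weighted-curve approximation in the lower semi-continuity step uniform in $\rho$ on $U$: the rate at which $\ell_\rho(\lambda_n)\to\ell_\rho(\lambda)$ must be controlled independently of $\rho$. This is exactly where the continuous family $\{f_\rho\}_{\rho\in U}$ is essential, as it lets one compare $\ell_\rho(\lambda_n)$ with $\ell_{h_\rho}(\lambda_n)$ via the bending cocycle of $f_\rho$ along a fixed train track carrying the $\lambda_n$, providing a $\rho$-uniform bound on the discrepancy which vanishes as $\lambda_n\to\lambda$ and $\rho\to\rho_0$, closing the argument.
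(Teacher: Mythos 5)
Your overall architecture---extend $|\lambda_0|$ to a maximal lamination $L$ realized by $\rho_0$, use Bonahon's shear-bend coordinates to obtain a continuously varying family of pleated surfaces $f_\rho$ with induced metrics $h_\rho$, and reduce to lengths on hyperbolic surfaces---is sensible, and the paper itself offers no proof of this statement (it is quoted from Brock), so the only question is whether your sketch closes. The upper semi-continuity half essentially does: $\ell_\rho(\mu)\le\ell_{h_\rho}(\mu)$ together with joint continuity of length on $\cT(S)\times\cML(S)$ gives $\limsup\ell_\rho(\lambda)\le\ell_{h_0}(\lambda_0)=\ell_{\rho_0}(\lambda_0)$ (though even the inequality $\ell_\rho(\mu)\le\ell_{h_\rho}(\mu)$ for irrational $\mu$ not carried by $L$ already uses continuity of $\ell_\rho$ in the lamination variable for fixed $\rho$, since $\ell_\rho(\mu)$ is defined on the pleated surface realizing $|\mu|$, not on $f_\rho$).

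The genuine gap is in the lower semi-continuity step, exactly where you flagged the obstacle. The claim that the discrepancy $\ell_{h_\rho}(\lambda_n)-\ell_\rho(\lambda_n)$ is controlled ``via the bending cocycle of $f_\rho$ along a fixed train track'' is not an argument, and the proposed mechanism is wrong: the bending cocycle does not bound this discrepancy. A closed curve whose total transverse measure against the bending lamination is tiny can still cross very many leaves of the pleating locus, and the accumulated failure of $f_\rho$ restricted to that curve to be geodesic is governed by the number of times the curve alternates across complementary pieces of $L$, not by the transverse bending measure. The correct tool is Thurston's efficiency of pleated surfaces, which bounds $\ell_{h}(c)-\ell_\rho(c)$ by a constant times the alternation number $a(c)$ of $c$ with respect to the pleating locus, with a constant depending on injectivity-radius control of the pleated surface (hence ultimately on the uniform injectivity theorem). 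Even granting that estimate, one must show that the weighted alternation numbers of the approximating multicurves tend to zero uniformly over all $\lambda$ near $\lambda_0$; since such $\lambda$ are in general not carried by $L$, this is doubtful for a single fixed family $f_\rho$ realizing a fixed $L$, and Brock's actual proof instead uses pleated surfaces adapted to the approximating curves together with a compactness argument for pleated surfaces. Finally, the ``identity'' $\ell_\rho(\lambda)=\lim_n\ell_\rho(\lambda_n)$ is itself the nontrivial continuity in the lamination variable for fixed $\rho$ and cannot simply be invoked without risking circularity. As it stands, the hard content of the theorem has been asserted rather than proved.
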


\subsection{End structures}\label{sec:end_structures}
Let $M$ be an hyperbolic three-manifold with incompressible boundary, and let $C(M)$ be its convex core. See \cite[\S 3.12,5.5]{MR3586015} for a definition of the \emph{relative ends} of $M$, which describe an \emph{ideal boundary} of $M$. A relative end $E$ corresponds to a decreasing nested sequence $V_1\supset V_2\supset\dots$ of connected open subsets of $M$ ``going to infinity'', each homeomorphic to $S_E\times (0,1)$, where $S_E$ is a finite type oriented surface (possibly with cusps) that only depends on $E$ (and actually realized as a pleated surface inside $M$).

A relative end is either contained in a the complement of the convex core $C(M)$, in which case it is \emph{geometrically finite} and it corresponds to a component of $\partial_{\infty}M$, or it is contained in $C(M)$, it is then \emph{geometrically infinite}. Geometrically finite ends do not contain any closed geodesic, since all closed geodesics are contained in the convex core $C(M)$. On the other hand, a geometrically infinite end $E$, with associated surface $S_E$, is characterized by the existence of a sequence of simple closed geodesics $(\gamma^*_n)_n$ in $S_E$ whose geodesic representatives $(\gamma_n)_n$ in $M$ have bounded length and exit the end $E$. The limit of $(\gamma_n^*)_n$ is a well-defined measurable geodesic lamination $\Lambda(E)\subset S_E$, called the \emph{ending lamination} of $E$ \cite{MR1435975,MR847953}. It is an arational measurable geodesic lamination on $S_E$.

The \emph{end invariant} of a Kleinian surface group $\rho\in\cAH_o(S)$ can be encoded into a quadruplet $\EE(\rho)=(P,\Sigma,m,L)$ where the parabolic locus $P$ is a simple essential multicurve on $S^-\sqcup S^+$, the moderate part $\Sigma$ is a union of connected components of $(S^-\sqcup S^+)-P$, $m\in\cT(\Sigma)$ and $L$ is a union of arational measured laminations in each component of $S(L)\coloneqq (S^-\sqcup S^+)-(P\cup\Sigma)$. For a connected component $L'$ of $L$, let $S(L')$ denote the connected component of $S(L)$ containing $L'$. In particular the end invariant $\EE(\rho)$ decomposes into a \emph{topological end data} $e=(P,L)$ and a \emph{conformal end data} $c=(\Sigma,m)$. We also identify $e$ with the measurable geodesic lamination $P\cup L$ and denote by $S(e)$ the open subsurface of $S$ obtained as the union of $S(L)$ and colar neighborhoods of components of $P$.

The end structure $\EE(\rho)$ is always \emph{doubly incompressible} in the sense of \cite[\S 2.9]{MR3134412}, i.e.\ $P\cup L$ has no parallel components. Any such end structure is realizable as the end invariant of a Kleinian surface group \cite{MR2821565,MR3001608}.

For a one-sided degenerated structure $\rho\in\cAH_o^+(S)$, its parabolic locus $P$ is entirely contained in $S^-$ and its moderate part has the form $\Sigma^-\sqcup S^+\subset S^-\sqcup S^+$.

Baba and Ohshika described the combination of ending laminations, parabolic locus and bending laminations that occur for Kleinian surface groups \cite{MR4651897}. It follows from their result that, for $\rho\in\cAH_o^+(S;e^-)$ where $e^-=P\cup L$, the bending lamination $\bb^+(\rho)$ of the top boundary of its convex core \emph{fills} $S(e^-)$ with $e^-$, i.e.\ any essential closed curve on $S(e^-)$ intersects $P\cup L$ or $\bb^+(\rho)$ (in other words, any essential closed curve on $S$ intersects $P\cup L\cup \Sigma^-\cup \bb^+(\rho)$). Hence it lies in the subspace $\cML^{\perp e^-}_{<\pi}(S^+)$ consisting of those measured laminations without closed leaves of weight $\geq\pi$, and which fill $S(e^-)$ with $e^-$.

\subsection{Metrics converging to an end structure}\label{sec:convergence_end_structures}
In \cite{MR3134412}, Anderson and Lecuire define a natural topology on the space $\cES(S)$ of end structures on $S$, i.e.\ of doubly incompressible quadruplets $(P,\Sigma,m,L)$ on $S$ as in the previous section. For our purpose we only need to describe one particular type of convergence. A sequence of hyperbolic metrics $m_n\in\cT(S)$ (seen as end structures of the form $(\emptyset,\emptyset,m_n,\emptyset)$) converges to an end structure $\cE=(P,\Sigma,m,L)$ if the following two conditions hold~:
\begin{enumerate}
	\item The metrics $m_n\restr{\Sigma}$ converge to $m$ in the length spectrum, i.e.\ $\ell_{m_n}(\partial\overline{\Sigma})\to 0$ and $\ell_{m_n}(c)\to\ell_m(c)$ for all non-peripheral closed curve $c\subset\Sigma$.
	\item For each component $L'$ of $L$, the restrictions $m_n\restr{S(L')}$ converge to $L'$, i.e.\ there is a non-peripheral simple closed curve $c\subset S(L')$ such that
	$\ell_{m_n}(\partial\overline{S(L')})/\ell_{m_n}(c)\to 0$
	and any subsequence of $m_n\restr{S(L')}$ contains a subsequence converging to a projective measured lamination supported by $L'$.
\end{enumerate}
Thus, we can identify $\cT(S)$ with an open subspace of $\cES(S)$. Note that $\cES(S^-\sqcup S^+)=\cES(S^-)\times\cES(S^+)$.

The \emph{end invariants} of an element $\rho$ of $\cAH_o(S)$ form an end structure $\EE(\rho)$ in $\cES(S^-\sqcup S^+)$. For $\rho\in\cAH_o^+(S)$, its end structure $\EE(\rho)$ has the form $(P,\Sigma^-,m^-,L)\times (\emptyset,\emptyset,m^+,\emptyset)\in\cE(S^-)\times\cT(S^+)$ (and any such end structure is \emph{doubly incompressible} in the sense of \cite[\S 2.9]{MR3134412}).

The following is a special case of Theorem D in \cite{MR3134412}. The injectivity part is a consequence of the Brock-Canary-Minsky ending lamination theorem \cite{MR2925381}, while the continuity and properness parts are due to Anderson and Lecuire \cite{MR3134412}. The only part that we need is the second sentence.
\begin{proposition}\label{prop:continuity_end_structures}
	If $\cSH_o^+(S)$ denotes $\cAH_o^+(S)$ with the strong topology instead of the algebraic topology, then the map $\EE\colon\cSH_o^+(S)\to\cES(S^-)\times\cT(S^+)$ which associates to $\rho$ its end structure is a homeomorphism, with inverse denoted $\overline{U}$.

	In particular, if a sequence $(\rho_n)_{n\in\N}\subset\cQF_o(S)$ has conformal boundary $(m^-_n,m^+_n)_{n\in\N}\in\cT(S^-)\times\cT(S^+)$ converging to the end structure $\EE(\rho_{\infty})=(\cE^-,m^+)\in\cES(S^-)\times\cT(S^+)$ of $\rho_{\infty}\in\cAH_o^+(S)$, then $\rho_n\to\rho_{\infty}$ algebraically.
\end{proposition}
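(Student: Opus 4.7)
The plan is to deduce this proposition directly from Theorem D of Anderson--Lecuire \cite{MR3134412} combined with the Ending Lamination Theorem of Brock--Canary--Minsky \cite{MR2925381}, and then to read off the algebraic convergence corollary. No fundamentally new argument is needed; the work is to verify that the hypotheses of the cited results give exactly the statement above.

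For the homeomorphism assertion, I would establish the three properties of $\EE$ in turn. Continuity of $\EE\colon\cSH_o^+(S)\to\cES(S^-)\times\cT(S^+)$ is part of Theorem D of Anderson--Lecuire, provided one checks that their definition of the topology on the space of end structures (the gallimaufries) matches the one described in \S\ref{sec:convergence_end_structures}. Injectivity is a direct consequence of the Brock--Canary--Minsky Ending Lamination Theorem: a discrete faithful representation $\rho\in\cAH_o(S)$ is uniquely determined up to conjugation by its end invariants $\EE(\rho)$, and in particular the restriction to $\cAH_o^+(S)$ is injective. Properness is also contained in Theorem D of Anderson--Lecuire. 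A continuous proper injection between locally compact Hausdorff spaces is a homeomorphism onto a closed subset of its target, and the image of $\EE$ restricted to $\cSH_o^+(S)$ is exactly $\cES(S^-)\times\cT(S^+)$ because this subspace parameterizes precisely those doubly incompressible end structures whose $S^+$-part is purely moderate, that is, one-sided degenerated. This gives the inverse $\overline{U}$.

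For the second statement, note that a quasi-Fuchsian representation $\rho_n\in\cQF_o(S)$ is geometrically finite on both ends with empty parabolic locus and no ending laminations, so its end invariant reduces to its conformal boundary, $\EE(\rho_n)=(m_n^-,m_n^+)\in\cT(S^-)\times\cT(S^+)$. Under the open inclusion $\cT(S^-)\hookrightarrow\cES(S^-)$, the hypothesis says that $\EE(\rho_n)\to\EE(\rho_\infty)$ in $\cES(S^-)\times\cT(S^+)$. Applying $\overline{U}$, we get $\rho_n\to\rho_\infty$ in the strong topology on $\cSH_o^+(S)$. Since strong convergence of Kleinian surface groups implies algebraic convergence of the associated representations, we conclude $\rho_n\to\rho_\infty$ algebraically in $\cAH_o(S)$, as required.

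The main obstacle, which is primarily a bookkeeping issue rather than a mathematical one, is matching the precise framework of \cite{MR3134412} (the gallimaufry topology, the notion of doubly incompressible end structures, the strong topology on $\cAH$, and the exact statement and range of their Theorem D) to the formulation used in this paper. Once this dictionary is in place, both sentences of the proposition follow by reading off the cited results, together with the elementary fact that strong convergence is finer than algebraic convergence for sequences in $\cAH_o(S)$.
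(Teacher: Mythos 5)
Your proposal is correct and follows essentially the same route as the paper, which simply records this statement as a special case of Theorem~D of Anderson--Lecuire, attributing continuity and properness to that theorem and injectivity to the Brock--Canary--Minsky ending lamination theorem, and then reads off the second sentence from strong convergence implying algebraic convergence. The only point worth flagging is that surjectivity of $\EE$ onto $\cES(S^-)\times\cT(S^+)$ rests on the realizability of doubly incompressible end structures (the density theorem of Ohshika and Namazi--Souto), which is also part of the cited framework.
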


\subsection{Quasi-conformal deformations of Kleinian groups}\label{sec:quasi_conformal_deformations_kleinian_groups}
Let $\rho_0\in\cAH(S)$ be a Kleinian surface group. A \emph{quasi-conformal deformation} of $\rho_0$ is a point $\rho\in\cAH(S)$ for which there exists a quasi-conformal homeomorphism $w\colon S^2\to S^2$ such that $w\circ\rho_0\circ w^{-1}=\rho$. Let $\cQH(\rho_0)\subset\cAH(S)$ denote the subspace of quasi-conformal deformations of $\rho_0$. As defined this way, $\cQH(\rho_0)$ is still endowed with the algebraic topology. However, there is another natural topology on $\cQH(\rho_0)$, where two quasi-conformal deformations are close to each other if they are conjugate by a quasi-conformal homeomorphism with quasi-conformal constant close to $1$. Those two topologies actually coincide~: for quasi-fuchsian groups it follows from Marden quasi-conformal stability theorem \cite[\S 5.1.2]{MR3586015}, in this case $\cQH(\rho_0)=\cQF_o(S)$ is the interior of $\cAH_o(S)$ (if $\rho_0\in\cAH_o(S)$), and in general from Matsuzaki \emph{conditional stability} theorem \cite[\S 7.4.2]{MR1638795}.

Combined with the simultaneous uniformization theorem, we have the following, see Theorems 7.53 and 7.55 in \cite[\S 7.4.2]{MR1638795}.
\begin{theorem}\label{thm:general_uniformization}
	Let $\rho_0\in\cAH_o(S)$ with nontempy conformal boundary. Then simultaneous uniformization provides a biholomorphic homeomorphism
	$$U\colon\cT(\partial_{\infty}\rho_0)\overset{\cong}{\longrightarrow}\cQH(\rho_0).$$
	Moreover, $\cQH(\rho_0)$ is a complex (regular) submanifold of an open subset of the representation variety of $\pi_1(S)$ into $PSL(2,\C)$.
\end{theorem}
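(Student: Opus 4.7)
The plan is to construct the map $U$ via the measurable Riemann mapping theorem of Ahlfors–Bers and show it is bijective, with the bijectivity relying on Sullivan's rigidity theorem for the limit set. Concretely, fix $\rho_0$ together with its domain of discontinuity $\Omega(\rho_0)\subset S^2$ and limit set $\Lambda(\rho_0)$. Given a point $m\in\cT(\partial_{\infty}\rho_0)$, choose a Beltrami differential $\mu$ on $\partial_{\infty}\rho_0$ representing $m$ with $\|\mu\|_{\infty}<1$, lift it to a $\rho_0$-equivariant Beltrami differential $\widetilde\mu$ on $\Omega(\rho_0)$, and extend by zero on $\Lambda(\rho_0)$ to obtain a bounded Beltrami differential $\widehat\mu$ on all of $S^2$. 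By the measurable Riemann mapping theorem there is a unique quasi-conformal homeomorphism $w\colon S^2\to S^2$ fixing $0,1,\infty$ with $\overline\partial w=\widehat\mu\,\partial w$. The $\rho_0$-equivariance of $\widehat\mu$ forces $w\rho_0w^{-1}$ to again be a subgroup of $\PSL(2,\C)$, discrete and faithful, and I set $U(m)\coloneqq w\rho_0w^{-1}\in\cAH_o(S)$. Independence of the choice of representative $\mu$ follows because two such choices produce maps $w,w'$ that differ by a quasi-conformal map conformal on $\Omega(\rho_0)$ and conjugating $\rho_0$ to itself, hence by an element of $\PSL(2,\C)$ after normalization.

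For surjectivity onto $\cQH(\rho_0)$, take any quasi-conformal $w\colon S^2\to S^2$ with $w\rho_0w^{-1}\in\cAH_o(S)$, and look at its Beltrami differential $\mu_w$. Equivariance of $w$ makes $\mu_w$ a $\rho_0$-equivariant Beltrami differential on $S^2$, and here the key input is Sullivan's rigidity theorem: a finitely generated Kleinian group admits no invariant line field on its limit set, so $\mu_w$ vanishes almost everywhere on $\Lambda(\rho_0)$. Thus $\mu_w$ is entirely determined by its restriction to $\Omega(\rho_0)$, which descends to a Beltrami differential on $\partial_{\infty}\rho_0$ and thus specifies a point $m\in\cT(\partial_{\infty}\rho_0)$ with $U(m)=[w\rho_0w^{-1}]$. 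Injectivity of $U$ is then routine: if $U(m_1)=U(m_2)$, then after composing the normalizations, $w_2^{-1}\circ w_1$ centralizes $\rho_0$ and induces a conformal map on $\partial_{\infty}\rho_0$ isotopic to the identity in the Teichmüller marking, forcing $m_1=m_2$.

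The biholomorphicity comes from the holomorphic dependence of solutions of the Beltrami equation on the Beltrami coefficient $\mu$ (Ahlfors–Bers): the map $\mu\mapsto w^{\mu}(\gamma\cdot)$ is holomorphic in $\mu$ for each $\gamma\in\PSL(2,\C)$, which gives holomorphy of $U$; conversely, standard Teichmüller theory shows that the induced quotient structures on $\partial_{\infty}\rho_0$ depend holomorphically on $\rho\in\cQH(\rho_0)$. For the ``moreover'' part, I would argue that the tangent space $T_{\rho_0}\cQH(\rho_0)$ is canonically identified with the space of $\rho_0$-equivariant $L^{\infty}$ Beltrami differentials supported in $\Omega(\rho_0)$, modulo the image of the coboundary map into the space of infinitesimal conjugations; this identifies it with the cotangent-type Teichmüller space and thus identifies $\cQH(\rho_0)$ locally with a complex submanifold of $\Hom(\pi_1(S),\PSL(2,\C))/\!/\PSL(2,\C)$ cut out transversally by holomorphic equations (the vanishing of the infinitesimal deformation on $\Lambda(\rho_0)$).

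The main obstacle is the surjectivity step: the whole argument hinges on Sullivan's rigidity (no invariant measurable line field on the limit set of a finitely generated Kleinian group). Without it, a quasi-conformal deformation could in principle carry nontrivial Beltrami data on $\Lambda(\rho_0)$ and the map $U$ would fail to capture all of $\cQH(\rho_0)$. Given Sullivan's theorem, the rest of the argument is the classical Ahlfors–Bers package; the submanifold statement then follows from the openness of $\cR$-deformations and the regularity of $\cT(\partial_{\infty}\rho_0)$ through the biholomorphism $U$.
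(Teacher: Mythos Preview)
The paper does not actually prove this theorem: it is stated as background and attributed to Theorems~7.53 and~7.55 of Matsuzaki--Taniguchi, with the preceding paragraph pointing to Marden's quasi-conformal stability theorem (quasi-Fuchsian case) and Matsuzaki's conditional stability theorem (general case) as the reason the algebraic and quasi-conformal topologies on $\cQH(\rho_0)$ agree. Your sketch is essentially the standard Ahlfors--Bers--Sullivan argument that underlies those references, and the main steps --- construction of $U$ via the measurable Riemann mapping theorem, surjectivity from Sullivan's no-invariant-line-field theorem, holomorphy from parameter dependence of $w^{\mu}$ --- are correctly identified.

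The weakest point is your handling of the ``moreover'' clause. Saying $\cQH(\rho_0)$ is ``cut out transversally by holomorphic equations (the vanishing of the infinitesimal deformation on $\Lambda(\rho_0)$)'' does not really make sense: the limit set is not a holomorphic subvariety of the representation variety, and there are no such equations to write down. The actual mechanism, which the paper alludes to just before the statement, is the stability theorem: one first knows $U$ is a biholomorphism onto $\cQH(\rho_0)$ equipped with the \emph{quasi-conformal} topology (Bers' deformation theory), and then Marden/Matsuzaki stability identifies that topology with the subspace topology from $\cX(S)$. This makes $U$ a holomorphic embedding of the complex manifold $\cT(\partial_{\infty}\rho_0)$ into $\cX(S)$, and injectivity of its differential (part of the stability statement) gives the regular-submanifold conclusion. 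Your tangent-space description via equivariant Beltrami differentials is the right heuristic for computing $T_{\rho_0}\cQH(\rho_0)$, but it does not by itself establish that the image sits as a submanifold.
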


Let $e_0=(P_0,L_0)$ be the topological end invariants of $\rho_0\in\cAH_o(S)$, i.e.\ $P_0$ is its parabolic locus and $L_0$ its ending lamination. Denote by $\cAH_o(S;e_0)$ the subspace of $\cAH_o(S)$ consisting of those representations with the same topological end invariants. Then the ending lamination theorem \cite{MR2925381} gives the following.
\begin{theorem}[Ending lamination theorem, Brock-Minsky-Canary \cite{MR2925381}]\label{thm:ending_lamination_theorem}
	Let $\rho_0\in\cAH_o(S)$ with topological end invariant $e_0$. Then $\cAH_o(S;e_0)=\cQH(\rho_0)$.
\end{theorem}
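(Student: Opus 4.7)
The plan is to establish the equality $\cAH_o(S;e_0)=\cQH(\rho_0)$ as two inclusions. The inclusion $\cQH(\rho_0)\subseteq\cAH_o(S;e_0)$ is essentially formal, while $\cAH_o(S;e_0)\subseteq\cQH(\rho_0)$ is the genuinely deep content, to be obtained from the Brock-Canary-Minsky theorem cited as a black box.

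For the easy inclusion, suppose $\rho=w\rho_0 w^{-1}$ for a quasi-conformal homeomorphism $w\colon S^2\to S^2$. Then $w$ carries $\Lambda(\rho_0)$ to $\Lambda(\rho)$, restricts to a quasi-conformal map $\Omega(\rho_0)\to\Omega(\rho)$ intertwining the two actions, and descends to a homeomorphism $M_{\rho_0}\to M_\rho$ that is bilipschitz off cusp neighborhoods. Since parabolicity in $\PSL(2,\C)$ is conjugation-invariant, parabolic fixed points correspond under $w$, so the parabolic locus $P$ is preserved. For a geometrically infinite end $E$ with associated surface $S_E$, the ending lamination $\Lambda(E)\in\cGL(S_E)$ is the limit of simple closed geodesics on $S_E$ whose representatives have bounded length in $M_{\rho_0}$ and exit $E$; bounded length survives the bilipschitz distortion on the thick part, and exiting the end is topological, so $\Lambda(E)$ is preserved. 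Hence $e(\rho)=e_0$.

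For the reverse inclusion, let $\rho\in\cAH_o(S;e_0)$. Applying Theorem \ref{thm:general_uniformization} to $\rho_0$, set $\rho_1\coloneqq U(\partial_{\infty}\rho)\in\cQH(\rho_0)$; this is well-defined because the moderate subsurface is determined by $e_0$ and is therefore common to $\rho_0$ and $\rho$. The easy inclusion applied to $\rho_1$ yields $e(\rho_1)=e_0=e(\rho)$, and by construction the conformal structures at infinity on the moderate part agree, so $\EE(\rho_1)=\EE(\rho)$. The Brock-Canary-Minsky ending lamination theorem \cite{MR2925381} then yields that $\rho$ and $\rho_1$ are conjugate in $\PSL(2,\C)$, so $\rho=\rho_1\in\cQH(\rho_0)$.

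The main obstacle is precisely this last black-box step: proving that a Kleinian surface group is determined up to conjugation by its full end invariants. A self-contained argument requires Minsky's combinatorial model manifold built from the end invariants via hierarchies of tight geodesics in the curve complex, the bilipschitz model theorem identifying this combinatorial model with a geometric model of $M_\rho$ with constants depending only on $S$, and a Sullivan-type rigidity argument upgrading the bilipschitz identification between two such models with identical end invariants into an isometry. This machinery vastly exceeds what a short plan can sketch and has to be imported wholesale.
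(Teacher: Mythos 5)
Your proposal is correct and matches the paper's intent: the paper states this result without proof as a direct consequence of the Brock--Canary--Minsky ending lamination theorem together with quasi-conformal deformation theory (Theorem \ref{thm:general_uniformization}), and your two-inclusion argument is the standard unpacking of exactly that deduction. The only cosmetic point is the phrase ``parabolicity is conjugation-invariant'': the conjugating map $w$ is not a M\"obius transformation, so the correct justification is that a topological conjugacy preserves fixed-point sets, and in a discrete torsion-free group one fixed point forces parabolicity --- but this is the standard type-preservation fact and your conclusion stands.
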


\section{Continuity}\label{sec:continuity}
\subsection{Holonomy of projective structures}
Let $\cX(S)$ denote the character variety of $S$, i.e.\ the space of $\PSL(2,\C)$-conjugacy classes of homomorphisms $\pi_1(S)\to\PSL(2,\C)$. Let also $\cP(S)$ be the space of complex projective structures on $S$, i.e.\ $(\PSL(2,\C),\CP^1)$-structures on $S$.

To a projective structure $Z$ on $S$ one can associate its \emph{developing map} $\dev_Z\colon\tilde{S}\to\CP^1$, which is an orientation-preserving local homeomorphism, and its \emph{holonomy representation} $\hol_Z\colon\pi_1(S)\to\PSL(2,\C)$. This defines a map
\begin{equation}\label{eq:holonomy_projective_structure}
	\hol\colon\cP(S)\to\cX(S)
\end{equation}
which is a local biholomorphic homeomorphism \cite[Theorem 5.1]{MR2497780}.

Denote by $\cP_0(S)$ the subspace of projective structures with injective developing map (it is closed in $\cP(S)$ \cite[Prop.\ 3.1.2]{MR1208313}). The holonomy representation $\hol_Z$ of such a projective structure $Z\in\cP_0(S)$ is actually discrete and faitfhul, and its domain of discontinuity $\Omega(\hol_Z)\subset S^2$ has an invariant simply-connected component~: the image of $\tilde{S}$ under $\dev_Z$. Denote by $\cB(S)$ the subspace of $\cX(S)$ consisting of representations with this property (i.e.\ discrete, faithful, with a simply-connected invariant component of the domain of discontinuity). The map $\hol$ restricts to a biholomorphic homeomorphism \cite[Prop.\ 3.2.3]{MR1208313}
\begin{equation}\label{eq:holonomy_injective_projective_structure}
	\hol\restr{\cP_0(S)}\colon\cP_0(S)\to\cB(S).
\end{equation}

\subsection{Thurston's grafting parameterization of projective structures}
We want to prove that the map $\bb^+\colon\cAH_o^+(S)\to\cML(S^+)$ sending a one-sided degenerated structure to the bending lamination of the top boundary of its convex core is continuous. This directly follows from Thurston's grafting parameterization of complex projective structures on $S$, as is detailed in this section and the next one.

The space $\cP(S)$ of projective structures on $S$ admits a geometric parameterization in terms of the grafting construction. Given a hyperbolic metric $m\in\cT(S)$ and a measured lamination $\lambda\in\cML(S)$, one can construct a complex projective structure $\Gr_{\lambda}(m)$ called the \emph{grafting} of $m$ along $\lambda$. When $\lambda$ is a weighted multicurve, the grafting is obtained by cutting $(S,m)$ along the curves of $\lambda$ and gluing in complex cylinders of lengths given by the weights of $\lambda$. This construction extends by continuity to all measured laminations. This describes Thurston's grafting parameterization
\begin{equation}\label{eq:holonomy_projective_structure}
	\Gr\colon\cT(S)\times\cML(S)\to\cP(S)
\end{equation}
which is a homeomorphism \cite{MR1208313}.

The inverse map $\Gr^{-1}$, when restricted to the space $\cP_0(S)$ of projective structures with injective developing map, can be described geometrically through convex pleated surfaces. Given a projective structure $Z\in\cP_0(S)$, its holonomy $\hol_Z$ stabilizes the component $\Omega\coloneqq\dev_Z(\tilde{S})\subset\CP^1$ of its domain of discontinuity. Seeing $\CP^1$ as the conformal boundary at infinity of $\HH^3$, denote by $\Pl(Z)$ the boundary of the convex hull of $\CP^1-\Omega$ inside $\HH^3$. It is a convex pleated plane, and it is $\pi_1(S)$-invariant by construction. With the induced path metric, $\Pl(Z)$ is isometric to $\HH^2$, hence its quotient by the $\pi_1(S)$-action defines a point in $\cT(S)$. Similarly, the bending measured lamination of $\Pl(Z)$ defines a point in $\cML(S)$. This pair of geometric data coincides with $\Gr^{-1}(Z)$.

\subsection{One-sided degenerated structures}
The space of one-sided degenerated structures $\cAH_o^+(S)$ is a subspace of $\cAH(S)\subset\cX(S)$. Actually, it is a subspace of $\cB(S)$. Indeed, consider a one-sided degenerated structure $\rho\in\cAH_o^+(S)$. By definition, the top end of $M_{\rho}$ is non-degenerated, hence the corresponding component of the domain of dicontinuity $\Omega^+(\rho)\subseteq\Omega(\rho)$ is simply-connected and invariant under $\rho$. Therefore, $\rho$ lies in $\cB(S)$ and under \eqref{eq:holonomy_injective_projective_structure} corresponds to a projective structure $Z_{\rho}\coloneqq\hol\restr{\cP_0(S)}^{-1}(\rho)\in\cP_0(S)$ with injective developing map $\dev_{Z_{\rho}}$. Moreover, the image of $\dev_{Z_{\rho}}$ coincides with $\Omega^+(X)$. In particular, the convex pleated surface $\Pl(Z_X)$ coincides with the top side of the convex core of $X$, hence their bending laminations coincide.

All in all, this identifies the bending map $\bb^+\colon\cAH_o^+(S)\to\cML(S^+)$ with the continuous map $$\proj_{\cML(S)}\circ\Gr^{-1}\circ\hol\restr{\cP_0(S)}^{-1}.$$

\begin{proposition}\label{prop:continuity_bending_map}
 The bending map $\bb^+\colon\cAH_o^+(S)\to\cML(S^+)$ is continuous on the space of one-sided degenerated structures on $S\times\R$.
\end{proposition}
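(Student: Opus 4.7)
The plan is to realize the bending map $\bb^+$ as a composition of already-established continuous maps, following the factorization outlined in the preceding two subsections. Explicitly, I would show that on $\cAH_o^+(S)$ one has the identity
$$\bb^+ \;=\; \proj_{\cML(S^+)}\circ\,\Gr^{-1}\circ\,(\hol\restr{\cP_0(S)})^{-1}.$$
Since $\hol\restr{\cP_0(S)}\colon\cP_0(S)\to\cB(S)$ is a biholomorphic homeomorphism by \eqref{eq:holonomy_injective_projective_structure}, and Thurston's grafting map $\Gr\colon\cT(S)\times\cML(S)\to\cP(S)$ is a homeomorphism, and the projection onto $\cML(S^+)$ is obviously continuous, continuity of $\bb^+$ will follow at once.

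The first step is to verify the inclusion $\cAH_o^+(S)\subset\cB(S)$. For a one-sided degenerated structure $\rho$, the top end of $M_\rho$ is geometrically finite and non-degenerated; thus the top conformal boundary is a Riemann surface structure on $S^+$, and lifting to $\HH^3$ yields a $\rho$-invariant simply-connected component $\Omega^+(\rho)\subset\Omega(\rho)$, which is precisely the defining condition of $\cB(S)$. Consequently each $\rho\in\cAH_o^+(S)$ corresponds under $(\hol\restr{\cP_0(S)})^{-1}$ to a projective structure $Z_\rho\in\cP_0(S)$ whose developing map is a homeomorphism onto $\Omega^+(\rho)$.

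The heart of the argument, and the only step requiring genuine verification, is to identify the convex pleated plane $\Pl(Z_\rho)\subset\HH^3$ with a $\pi_1(S)$-equivariant lift of the top boundary $\partial^+ C(M_\rho)$ of the convex core. By construction, $\Pl(Z_\rho)$ is the boundary component of the convex hull of $\CP^1-\Omega^+(\rho)$ inside $\HH^3$ that faces $\Omega^+(\rho)$. Because $\Omega^+(\rho)$ is a connected component of $\Omega(\rho)$, the points of $\CP^1-\Omega^+(\rho)$ outside $\Lambda(\rho)$ lie in the other components of $\Omega(\rho)$, which sit on the far side of $\Lambda(\rho)$; hence on the side facing $\Omega^+(\rho)$ this convex hull agrees with the convex hull of $\Lambda(\rho)$. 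Quotienting by $\rho$ therefore produces $\partial^+ C(M_\rho)$ with its induced hyperbolic metric and its pleating, so $\proj_{\cML(S^+)}\Gr^{-1}(Z_\rho)=\bb^+(\rho)$, establishing the factorization.

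The main obstacle I foresee is precisely this convex-hull comparison: one needs to confirm that the boundary component of $\mathrm{ConvHull}(\CP^1-\Omega^+(\rho))$ facing $\Omega^+(\rho)$ is unaffected by the other components of $\Omega(\rho)$, which follows from the topology of $\Lambda(\rho)$ separating $\Omega^+(\rho)$ from the rest of $S^2$, together with the fact that the supporting planes of $C(\rho)$ at a boundary point do not see across the limit set. Once this geometric identification is granted, all remaining steps are immediate invocations of the homeomorphism properties of $\hol\restr{\cP_0(S)}$ and $\Gr$.
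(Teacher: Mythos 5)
Your proof is correct and follows essentially the same route as the paper: identifying $\cAH_o^+(S)$ with a subspace of $\cB(S)$ and factoring $\bb^+$ as $\proj_{\cML(S^+)}\circ\Gr^{-1}\circ(\hol\restr{\cP_0(S)})^{-1}$, so that continuity follows from Thurston's grafting homeomorphism and the holonomy homeomorphism onto $\cB(S)$. You additionally spell out the identification of the dome $\Pl(Z_\rho)=\partial\,\mathrm{ConvHull}(\CP^1-\Omega^+(\rho))$ with the lift of $\partial^+C(M_\rho)$ (the Epstein--Marden fact that this boundary component depends only on $\Omega^+(\rho)$), a step the paper asserts without comment.
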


\begin{corollary}\label{cor:continuity}
	The map $\BB^+\colon [0,\infty]\times\cT(S^+)\to\cML(S^+)$ defined in \S \ref{sec:outline} is continuous.
\end{corollary}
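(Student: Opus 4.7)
The plan is to exploit the fact that $\BB^+$ is by construction a composition of three maps, each of which I can handle separately. Explicitly,
\[
\BB^+ \;=\; \bb^+ \,\circ\, \overline{\UU} \,\circ\, (\overline{r}\times\rm{incl}),
\]
with the first factor landing in $\cES(S^-)\times\cT(S^+)$, the second in $\cAH_o^+(S)$, and the third in $\cML(S^+)$. So the strategy is simply to check continuity of each factor in turn.

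For the first factor, the map $\overline{r}\colon[0,\infty]\to\cES(S^-)$ is continuous on $[0,\infty)$ because $r$ is continuous as a ray in $\cT(S^-)\subset\cES(S^-)$, and its continuity at $t=\infty$ is exactly the running assumption that $\lim_{t\to\infty}r(t)=\cE^-$ in the topology of $\cES(S^-)$ recalled in \S\ref{sec:convergence_end_structures}. Paired with the identity on $\cT(S^+)$, this makes the first factor continuous. For the second factor, I invoke Proposition \ref{prop:continuity_end_structures}: $\overline{\UU}$ is a homeomorphism from $\cES(S^-)\times\cT(S^+)$ onto $\cSH_o^+(S)$ equipped with the \emph{strong} topology. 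Since the strong topology is finer than the algebraic topology, composing with the identity $\cSH_o^+(S)\to\cAH_o^+(S)$ preserves continuity, so $\overline{\UU}$ remains continuous as a map into $\cAH_o^+(S)$ with its algebraic topology. For the third factor, continuity of $\bb^+\colon\cAH_o^+(S)\to\cML(S^+)$ is precisely Proposition \ref{prop:continuity_bending_map}.

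Composing the three continuous maps gives continuity of $\BB^+$, which is the statement of the corollary. There is no genuine obstacle here beyond unpacking definitions: all of the substantive work has been done either in Proposition \ref{prop:continuity_end_structures} (Anderson--Lecuire's continuity-of-end-structures theorem, which upgrades convergence in $\cES$ to algebraic convergence in $\cAH$) or in Proposition \ref{prop:continuity_bending_map} (continuity of the bending map via the holonomy/grafting parameterization of projective structures). The role of this corollary is merely to package these two inputs together with the chosen convergent ray $r$, so that ingredient (\ref{item:continuity}) of the outline of \S\ref{sec:outline} is available in exactly the form needed for the subsequent limiting argument on $\bb^+_t$.
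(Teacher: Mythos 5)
Your proof is correct and follows exactly the paper's own argument: decompose $\BB^+$ as $\bb^+\circ\overline{\UU}\circ(\overline{r}\times\mathrm{incl})$ and cite Proposition \ref{prop:continuity_end_structures} and Proposition \ref{prop:continuity_bending_map} for the two nontrivial factors. Your explicit remark that the strong topology refines the algebraic one (so that $\overline{\UU}$ remains continuous into $\cAH_o^+(S)$, where $\bb^+$ is defined) is a detail the paper leaves implicit, and it is handled correctly.
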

\begin{proof}
	By the above Proposition, $\bb^+$ is continuous. The map $\overline{U}$ is continuous by Proposition \ref{prop:continuity_end_structures}, while $\overline{r}$ is continuous by construction. Thus $\BB^+$ is continuous.
\end{proof}

\section{Properness}\label{sec:properness}
The goal of this section is to prove that the map $\mm^-\times\bb^+\colon\cAH_o^+(S;e^-)\to\cT(\Sigma^-)\times\cML^{\perp e^-}_{<\pi}(S^+)$ is proper (see the end of Section \ref{sec:end_structures} for a description of the bending laminations that occur for strucutes in $\cAH_o^+(S;e^-)$). This will imply that $\bb^+_{\infty}\colon\cT(S^+)\to\cML^{\perp e^-}_{<\pi}(S^+)$ is proper too. The proof is very similar to the proof of properness of similar maps by Baba and Ohshika in \cite{MR4651897}, Anderson and Lecuire in \cite{MR3134412} and Lecuire in \cite{lecuire2025propernessbendingmap}. It essentially follows by combining the last two references.

\begin{proposition}\label{prop:properness}
	Let $e^-=(P,L)$ be a topological end structure on $S^-$. Let $\Sigma^-\coloneqq S^--(P\cup S(L))$ be the moderate subsurface of the bottom end, and let $S^+=S$ be the one of the top end.

	Let $\cML^{\perp e^-}_{<\pi}(S^+)$ be the subspace of $\cML(S^+)$ consisting of the measured laminations $\mu$ satisfying the following conditions~:
	\begin{enumerate}
		\item closed leaves of $\mu$ have weight strictly smaller than $\pi$~;
		\item $\mu$ and $e^-$ together fill up the immoderate surface $S(e^-)$ (identifying $S(e^-)$ with a subsurface of $S^+$), i.e.\ for any essential closed curve $c$ on $S(e^-)$ one has $i(c,\mu)+i(c,e^-)>0$ (where in the last inequality one needs to pick a measured lamination representing $e^-$).
	\end{enumerate}
	
	Consider the map
	$$\mm^-\times\bb^+\colon\cAH_o^+(S;e^-)\longrightarrow\cT(\Sigma^-)\times\cML^{\perp e^-}_{<\pi}(S^+)$$
	that associates to a structure $\rho$ the pair $(\mm^-(\rho),\bb^+(\rho))$ of the conformal structure at infinity of the moderate surface of the bottom end and the bending lamination of the top side of the convex core.

	Then $\mm^-\times\bb^+$ is proper.
\end{proposition}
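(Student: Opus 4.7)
The plan is to follow the blueprint established by Bonahon-Otal \cite{MR2144972} and Lecuire \cite{MR2207784}, adapted to the one-sided degenerated setting. Take a sequence $(\rho_n)_{n\in\N}\subset\cAH_o^+(S;e^-)$ whose image $(\mm^-(\rho_n),\bb^+(\rho_n))$ converges in $\cT(\Sigma^-)\times\cML^{\perp e^-}_{<\pi}(S^+)$ to a limit $(m_\infty^-,\mu_\infty^+)$. By Theorem~\ref{thm:Morgan-Shalen}, after extraction either $\rho_n$ converges algebraically, or, rescaled by some $\epsilon_n\searrow 0$, it converges to a nontrivial minimal action of $\pi_1(S)$ on the real tree $\cT_\mu$ dual to some $\mu\in\cML(S)$. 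The first step is to rule out the second alternative; the second is to verify that the algebraic limit remains in $\cAH_o^+(S;e^-)$.

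To rule out the tree limit, I will show that $\mu=0$ by checking that $i(\mu,\cdot)$ vanishes on each of $\mu_\infty^+$, $P$, $L$, and every non-peripheral simple closed curve on $\Sigma^-$; since by hypothesis $\mu_\infty^+\cup P\cup L\cup\Sigma^-$ fills $S$, this forces $\mu=0$, a contradiction. For $\mu_\infty^+$, the Bonahon-Otal/Lecuire length bound asserts that a convex-core bending lamination whose closed leaves have weight uniformly bounded away from $\pi$ has uniformly bounded length, hence $\epsilon_n\ell_{\rho_n}(\bb^+(\rho_n))\to 0=i(\mu,\mu_\infty^+)$. For non-peripheral closed curves $c$ on $\Sigma^-$, the convergence $\mm^-(\rho_n)\to m_\infty^-$ in $\cT(\Sigma^-)$ gives bounded extremal length, and therefore bounded hyperbolic length via a pleated surface carrying the moderate subsurface in $M_{\rho_n}$, so $i(\mu,c)=0$. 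For $P$ we have $\ell_{\rho_n}(P)=0$, so trivially $i(\mu,P)=0$. Finally, for $L$, the ending-lamination description of the bottom end provides, for each component $L'$, a sequence of simple closed curves on $S(L')$ whose $\rho_n$-geodesic representatives exit the bottom end with uniformly bounded length; together with Theorem~\ref{thm:brock_continuity} this yields $i(\mu,L)=0$.

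Once algebraic convergence $\rho_n\to\rho_\infty$ is secured, I identify $\rho_\infty$ as an element of $\cAH_o^+(S;e^-)$. Discreteness and faithfulness follow from the Jorgensen-Kazhdan-Margulis theorem, so $\rho_\infty\in\cAH(S)$. Continuity of $\bb^+$ (Proposition~\ref{prop:continuity_bending_map}) gives $\bb^+(\rho_\infty)=\mu_\infty^+$, and the weight bound $<\pi$ on $\mu_\infty^+$ combined with Lecuire's bound on $\mm^+(\rho_n)$ in terms of its bending lamination ensures that the top end of $\rho_\infty$ stays non-degenerated. For the bottom end, I invoke Proposition~\ref{prop:continuity_end_structures}: the convergence $\mm^-(\rho_n)\to m_\infty^-$ on $\Sigma^-$ together with the constancy of the topological end data $(P,L)$ along the sequence implies that the bottom end of $\rho_\infty$ has parabolic locus $P$, moderate subsurface $\Sigma^-$ with conformal structure $m_\infty^-$, and ending lamination $L$. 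Hence $\rho_\infty\in\cAH_o^+(S;e^-)$, proving properness.

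The main obstacle I anticipate is the length control on the ending lamination $L$ in the tree-limit step: unlike $P$ (zero length) or curves on $\Sigma^-$ (controlled by the moderate conformal data), the leaves of $L$ are not geodesic representatives of closed curves but only approximated by them, so ensuring that the rescaled lengths of these approximating curves tend to zero requires invoking Thurston's uniform boundedness of pleated surfaces realizing bounded-length curves in a fixed topological end, applied uniformly along the sequence. Closely related is ensuring that the algebraic limit acquires no accidental parabolics on $\Sigma^-$, which relies crucially on the strong-convergence input of Proposition~\ref{prop:continuity_end_structures} and the assumption that $\mm^-(\rho_n)$ converges inside $\cT(\Sigma^-)$ rather than to its boundary.
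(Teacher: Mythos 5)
Your overall blueprint (Morgan--Shalen dichotomy, rule out the tree limit, then identify the algebraic limit via Anderson--Lecuire and Bonahon--Otal/Lecuire) matches the paper's, but the core of your tree-limit step has a genuine gap. You propose to show $i(\mu,\cdot)=0$ on each member of the filling family $\{\mu_\infty^+,P,L,\ \text{curves in }\Sigma^-\}$ and conclude $\mu=0$. This inference is false: vanishing intersection number with a measured lamination does not mean disjointness from its support. In particular $\mu$ could be supported on a component $L'$ of $L$ itself (or share a minimal irrational component with $\mu_\infty^+$); then $i(\mu,P)=i(\mu,L)=i(\mu,\mu_\infty^+)=0$ and $\mu$ misses $\Sigma^-$, yet $\mu\neq 0$. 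The filling hypothesis is a statement about essential closed curves and gives you no leverage against such a $\mu$, since $\mu$ need not contain any closed leaf. This is exactly the case the paper's proof is organized around: it first shows (via $i(\mu,P)=0$, Sullivan's convex hull theorem on $\Sigma^-$, and the Anderson--Lecuire non-realizability of $L$ in $\cT_\mu$) that the support of $\mu$ must be \emph{contained in} $L$, and only then derives the contradiction from $i(\mu,\lambda^+)>0$ together with Bridgeman's uniform bound on the length of convex-core bending laminations and Otal's continuity theorem. Your argument never confronts the case $\supp\mu\subset L$, so it does not close.

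Two secondary points. First, your derivation of $i(\mu,L)=0$ from bounded-length curves exiting the bottom end is a diagonal argument without the required uniformity in $n$; the correct route (and the one the paper takes) is the realization-in-the-tree argument of Anderson--Lecuire combined with Otal's theory, which yields that $L$ does not cross $\mu$ --- note this gives more than $i(\mu,L)=0$, namely the containment of $\supp\mu$ in $L$ once $\mu$ is known to live in $S(L)$ where $L$ is arational. Relatedly, your claim $\epsilon_n\ell_{\rho_n}(\bb^+(\rho_n))\to i(\mu,\mu_\infty^+)$ needs Otal's continuity theorem for irrational laminations; the plain Morgan--Shalen convergence only controls closed curves. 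Second, in identifying the algebraic limit you invoke continuity of $\bb^+$ at $\rho_\infty$ and Proposition \ref{prop:continuity_end_structures}; the former presupposes $\rho_\infty\in\cAH_o^+(S)$ (which is what you are trying to prove about the top end), and the latter concerns quasi-Fuchsian approximating sequences, not your sequence. The non-degeneracy of the top end of $\rho_\infty$ should instead come from the geometric convergence of the convex pleated surfaces $\partial^+C(\rho_n)$ under the weight-$<\pi$ hypothesis (Proposition \ref{prop:pleated_surfaces_convergence}), and the bottom end from Lemmas 4.1--4.2 of Anderson--Lecuire.
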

\begin{proof}
	Consider a sequence of representations $(\rho_n)_n$ in $\cAH_o^+(S;e^-)$ with convergent (b)ending invariants $(\mm^-(\rho_n),\bb^+(\rho_n))_n\subset\cT(\Sigma^-)\times\cML^{\perp e^-}_{<\pi}(S^+)$. By Proposition \ref{prop:algebraic_convergence}, it has an algebraically convergent subsequence. By Corollary \ref{cor:algebraic_limit_in_QH}, this algebraic limit belongs to $\cAH_o(S;e^-)$. This concludes the proof.
\end{proof}

\subsection{Algebraic convergence}
The first step of the proof of properness is the following.

\begin{proposition}\label{prop:algebraic_convergence}
	Let $(\rho_n)_n$ be a sequence of representations in $\cAH_o^+(S;e^-)$ such that $(\mm^-\times\bb^+(\rho_n))_n$ converges in $\cT(\Sigma^-)\times\cML^{\perp e^-}_{<\pi}(S^+)$. Then $(\rho_n)_n$ has an algebraically convergent subsequence.
\end{proposition}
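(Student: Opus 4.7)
The plan is to argue by contradiction using Morgan--Shalen's degeneration theorem. Assume no subsequence of $(\rho_n)$ converges algebraically; then by Theorem \ref{thm:Morgan-Shalen}, after extracting a subsequence there exist rescaling factors $\epsilon_n \searrow 0$ and a nonzero measured lamination $\mu \in \cML(S)$ such that $\epsilon_n \ell_{\rho_n}(\gamma) \to i(\mu,\gamma)$ for all $\gamma \in \pi_1(S)$. To contradict $\mu \neq 0$ I will show $i(\mu,\alpha) = 0$ for every essential simple closed curve $\alpha$ on $S$; by considering a minimal component, this forces $\mu = 0$. In each case it suffices to produce a uniform upper bound $\ell_{\rho_n}(\alpha) \leq C_\alpha$, so that $i(\mu,\alpha) = \lim_n \epsilon_n \ell_{\rho_n}(\alpha) = 0$.

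Write $\lambda^+_\infty \coloneqq \lim \bb^+(\rho_n) \in \cML^{\perp e^-}_{<\pi}(S^+)$ and $m^-_\infty \coloneqq \lim \mm^-(\rho_n) \in \cT(\Sigma^-)$. Uniform length bounds will come from three sources.
\textbf{(a) Top bending.} Since the closed leaves of $\lambda^+_\infty$ carry weight strictly less than $\pi$, the weights of $\bb^+(\rho_n)$ on closed leaves are eventually uniformly bounded away from $\pi$. The standard length estimates for convex pleated surfaces with bending of controlled weight (Bonahon--Otal \cite{MR2144972}, Lecuire \cite{MR2207784}) then provide uniform bounds $\ell_{\rho_n}(\alpha) \leq C_\alpha$ for every curve $\alpha$ intersecting $\lambda^+_\infty$ transversely, as well as for the closed leaves of $\lambda^+_\infty$ themselves via the area--bending comparison on the pleated boundary.
\textbf{(b) Bottom moderate subsurface.} Since $\mm^-(\rho_n) \to m^-_\infty$ in $\cT(\Sigma^-)$, standard length comparisons between the conformal boundary and the Kleinian manifold give uniform bounds $\ell_{\rho_n}(\alpha) \leq C_\alpha$ for every essential curve $\alpha \subset \Sigma^-$.
\textbf{(c) Bottom topological end data.} The curves in $P$ become parabolic, so $\ell_{\rho_n}(c) \to 0$ for $c \subset P$; and for each arational component $L'$ of $L$, the characterization of ending laminations furnishes a sequence $\gamma_k \subset S(L')$ converging projectively to $L'$ with $\ell_{\rho_n}(\gamma_k)$ uniformly bounded in $n$, forcing any component of $\mu$ meeting $S(L')$ to be supported on $L'$ (and similarly components meeting a collar of $P$ lie in $P$).

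To finish, I invoke the filling hypothesis: by definition of $\cML^{\perp e^-}_{<\pi}(S^+)$, the union $\lambda^+_\infty \cup e^-$ fills $S(e^-) = S - \Sigma^-$, so together with $\Sigma^-$ the triple fills $S$. Consequently every essential simple closed curve $\alpha$ on $S$ is either contained in $\Sigma^-$, intersects $\lambda^+_\infty$ transversely (or is one of its closed leaves), or intersects $e^-$ transversely (or is one of its leaves); in every case (a)--(c) furnishes the required uniform bound, and therefore $i(\mu,\alpha) = 0$. Since a nonzero measured lamination always admits a closed curve of positive transverse intersection, $\mu = 0$, contradicting Morgan--Shalen. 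I expect the main obstacle to lie in step~(a): verifying that the length estimates of Bonahon--Otal and Lecuire apply uniformly in the one-sided degenerated setting, and handling the delicate case where minimal components of $\mu$ share support with closed leaves of $\lambda^+_\infty$ or with components of $L$---this is where the strict inequality on weights and the arationality of components of $L$ enter in an essential way.
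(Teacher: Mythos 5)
Your overall framework (contradiction via Morgan--Shalen) matches the paper's, but the way you try to derive the contradiction does not work. You aim to show $\mu=0$ by producing uniform \emph{upper} bounds $\ell_{\rho_n}(\alpha)\leq C_\alpha$ for every essential simple closed curve $\alpha$. Two of your three sources of bounds are not available. In step (a), there is no estimate bounding from above the length of an arbitrary curve transverse to $\bb^+(\rho_n)$ just because the bending laminations converge with closed leaves of weight $<\pi$: if such bounds existed, $\bb^+$ alone would be proper, which is false (one can fix $\bb^+$ and degenerate the bottom end freely; this is exactly what happens along the sequences under consideration). What Bridgeman/Bonahon--Otal/Lecuire actually give is an upper bound on the length of the bending measured lamination \emph{itself}, $\ell_{\rho_n}(\bb^+(\rho_n))$ --- a much weaker statement, and the one the paper uses. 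Similarly, in step (c), curves crossing $P$ or $L$ transversely have no uniform upper length bound: they penetrate the cusps and the degenerate end, and their lengths genuinely tend to infinity. The correct statement (Anderson--Lecuire) is that the components of $L$ are not \emph{realized} in the dual tree $\cT_\mu$, which controls $i(\mu,L)$, not the lengths of transversals. Because $\epsilon_n\to 0$, unbounded lengths do not force $i(\mu,\alpha)>0$, so the right conclusion is not $\mu=0$ but a localization of the support of $\mu$.

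The paper's proof proceeds exactly by that localization: $i(\mu,P)=0$ since $P$ is parabolic; $\mu$ cannot meet $\Sigma^-$ by Sullivan's convex hull theorem (your step (b) is the one correct ingredient); and $\mu$ cannot cross $L$ by Anderson--Lecuire, so by arationality the support of $\mu$ lies in $L$. The filling hypothesis then forces $i(\mu,\lambda^+)>0$, and the contradiction comes from two ingredients absent from your argument: Otal's continuity theorem, needed to upgrade the implication $i(\mu,c)>0\Rightarrow\ell_{\rho_n}(c)\to\infty$ from closed curves to the (possibly irrational) minimal components of $\lambda^+$, and Bridgeman's universal bound on the length of the bending lamination of the convex core boundary, which this divergence contradicts. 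You flagged step (a) as the likely obstacle; it is indeed fatal as stated, and repairing it requires restructuring the endgame along these lines rather than patching the length estimates.
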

\begin{proof}
	Write $(\mm^-\times\bb^+)(\rho_n)=(m_n^-,\lambda^+_n)$ for each $n\in\N$, and $(m^-,\lambda^+)$ their limit.

	Let us assume that $(\rho_n)_{n}$ has no algebraically convergent subsequence. By Morgan-Shalen theorem \ref{thm:Morgan-Shalen} \cite{MR769158} combined with Skora's description of geometric actions on trees \cite{MR1339846} (see 2.2.12 and 2.3.5 in \cite{MR1402300}), up to extracting a subsequence, $(\rho_n)_n$ converges to the action of $\Gamma=\pi_1(S)$ on the real tree $\cT_{\mu}$ dual to a measured lamination $\mu\in\cML(S)$, i.e.\ there is a sequence of positive real numbers $\epsilon_n\searrow 0$ such that $\epsilon_n\ell_{\rho_n}(\gamma)\to i(\mu,\gamma)$ for all $\gamma\in\Gamma$ (where $\ell_{\rho_n}(\gamma)$ denotes the length of a geodesic representative of $\gamma$ in $M_{\rho_n}\coloneqq\HH^3/\rho_n(\Gamma)$, and is zero if $\rho_n(\gamma)$ is parabolic).

	We will show that $\mu$ cannot intersect $\Sigma^-$ nor $e^-=P\cup L$ nor $\lambda^+$, which then contradicts the condition that $e^-\cup\lambda^+$ must fill $S(e^-)$. Note that this argument also applies in the case where $\lambda^+=0$, since in that case $\Sigma^-=S^+$ and $e^-=\emptyset$.

	Since $\epsilon_n\ell_{\rho_n}(c)\to i(\mu,c)$ for any essential closed curve $c$ on $S$ one has the implication
	\begin{equation}\label{eq:curve_length_infinity}
		i(\mu,c)>0\quad\Longrightarrow\quad \ell_{\rho_n}(c)\to \infty.
	\end{equation}

	First, since $P$ consists of parabolics in $M_{\rho_n}$, we have $\ell_{\rho_n}(P)=0$ for each $n$, hence $i(\mu,P)=0$ by \eqref{eq:curve_length_infinity}. Thus, each component of $\mu$ is either contained in $\Sigma^-$ or in $S(L)$. Suppose that $\mu$ has a component contained in $\Sigma^-$, then there is an essential closed curve $c$ in $\Sigma^-$ with $i(\mu,c)>0$, hence $(\ell_{\rho_n}(c))_n$ is unbounded by \eqref{eq:curve_length_infinity}. But $(\ell_{m^-_n}(c))_n$ is bounded since $(m^-_n)_n$ is bounded in $\cT(\Sigma^-)$. By Sullivan's convex hull theorem \cite{MR0903852}, $(\Sigma^-,m^-_n)$ is $K$-quasi-isometric to the boundary of the convex core of $M_{\rho_n}$ facing it, with $K$ a universal constant. In particular, this implies that $(\ell_{\rho_n}(c))_n$ is bounded, a contradiction. Therefore, $\mu$ has support contained in $S(L)=S^--(P\cup\Sigma^-)$.

	In \cite{MR3134412}, Anderson and Lecuire show that no component of $L$ can be \emph{realized} in the dual tree $\cT_{\mu}$ (see Section \ref{sec:laminations_and_trees} above, or Claim 3.5 and Lemma 3.6 in \cite{MR3134412}). In particular, by Proposition 3.1.1 in \cite{MR1402300}, we obtain that no component of $L$ \emph{intersects} $\mu$ (in the sense of \S \ref{sec:laminations_and_trees}, see \cite[\S 3]{MR1402300}). But $\mu\subset S(L)$, and each component of $L$ is arational (i.e.\ minimal, maximal and irrational) in its respective component of $S(L)$, hence the support of $\mu$ must be contained in $L$ \cite[A.3.10]{MR1402300}.

	Since $e^-$ and $\lambda^+$ fill $S(e^-)$ and $\mu$ has support contained in $L\subset e^-$, we obtain that $i(\mu,\lambda^+)>0$. Thus there is a minimal component $\lambda^+_0\subset\lambda^+$ with $i(\mu,\lambda^+_0)>0$. In other words, the support of $\lambda^+_0$ is \emph{realized} in $\cT_{\mu}$.
	
	This leads to a contradiction, as Lecuire shows in \cite{lecuire2025propernessbendingmap}. His argument is divided into two cases, depending on whether $\lambda^+_0$ is supported on an irrational lamination \cite[\S 3.1]{lecuire2025propernessbendingmap} or a closed curve \cite[\S 3.2]{lecuire2025propernessbendingmap}. The resulting contradictions stem from Otal's continuity theorem applied to $\lambda^+_0$ \cite[Theorem 4.0.1]{MR1402300}\cite[Theorem 3.7]{MR4264581}, which provides uniform control on the growth of lengths of curves (or weighted multicurves, by linearity) that are close to a lamination realized in the limit tree. Using convergence of the bending laminations $(\lambda^+_n)_n$ to $\lambda^+$, Lecuire constructs sequences of weighted simple closed curves whose lengths with respect to $\rho_n$ grow too slowly, leading to a contradiction.

	\begin{theorem}[Otal continuity theorem]\label{thm:Otal_continuity}
		Let $(\rho_n)_{n\in\N}\subset\cAH(S)$ be a sequence converging to a small minimal action of $\pi_1(S)$ on a real tree $\cT_{\mu}$, where $\mu\in\cML(S)$. Let $\epsilon_n\searrow 0$ be such that $\epsilon\ell_{\rho_n}\to i(\mu,-)$. Let $s_0\in\cT(S)$ be any hyperbolic metric on $S$. Let $\alpha\subset S$ be a geodesic lamination \emph{realized} in $\cT_{\mu}$, i.e.\ each leaf of $\alpha$ intersects $\mu$ transversely. Then there exists a neighborhood $\cV(\alpha)\subset\cGL(S)$ of $\alpha$ and constants $K>0$, $n_0$ such that, for any simple closed curve $c\in\cV(\alpha)$ and for any $n\geq n_0$,
		\begin{equation}
			\epsilon_n\ell_{\rho_n}(c)\geq K\ell_{s_0}(c).
		\end{equation}
	\end{theorem}

	In the first case, where $\lambda^+_0$ is supported on a minimal irrational lamination, Lecuire constructs a sequence $(\gamma_n)_{n\in\N}$ of weighted simple closed curves $(\gamma_n)_n$ with controlled lengths with respect to $\rho_n$. The sequence of weighted curves $(\gamma_n)$ is obtained by first approximating the sequence of bending laminations $(\lambda^+_n)_n$ by weighted simple closed curves $(\nu_n)_n$, so that $\nu_n\to\lambda^+$. Then, for each $n$, $\gamma_n$ is obtained from $\nu_n$ by ``cutting away'' the part of $\nu_n$ that lies outside of $S(\lambda_0^+)$. This can be done in a careful manner so that $(\gamma_n)_n$ converges to $w\lambda_0^+$ for some $w>0$ (up to a subsequence), see \cite[\S 3.1]{lecuire2025propernessbendingmap}. It then follows from the construction that $\epsilon_n\ell_{\rho_n}(\gamma_n)\to 0$, contradicting Theorem \ref{thm:Otal_continuity}. Indeed, since $|\gamma_n|\to|\lambda^+_0|$ in $\cGL(S)$, Theorem \ref{thm:Otal_continuity} implies that, for $n$ large enough, $\epsilon_n\ell_{\rho_n}(\gamma_n)\geq K\ell_{s_0}(\gamma_n)$ but the right hand-side is bounded away from $0$ since $\ell_{s_0}(\gamma_n)\to\ell_{s_0}(w\lambda^+_0)>0$.

	In the second case, $\lambda_0^+$ is supported on a simple closed curve $d$. In particular, $\epsilon_n\ell_{\rho_n}(d)\to i(\mu,d)>0$. In that case, Lecuire shows that any closed curve $c$ transverse to $d$ satisfies \cite[Lemma 3.5]{lecuire2025propernessbendingmap} $$\frac{\epsilon_n\ell_{\rho_n}(d)}{\epsilon_n\ell_{\rho_n}(c)}=\frac{\ell_{\rho_n}(d)}{\ell_{\rho_n}(c)}\to 0.$$
	But $\epsilon_n\ell_{\rho_n}(c)\to i(\mu,c)$, so $\epsilon_n\ell_{\rho_n}(d)\to 0$, a contradiction. This concludes the proof.
\end{proof}

\subsection{Geometric convergence}
Let $(\rho_n)_n$ be a sequence of representations in $\cAH_o^+(S;e^-)$ as in Proposition \ref{prop:algebraic_convergence} with $\mm^-\times\bb^+(\rho_n)=(m^-_n,\lambda^+_n)$ converging to $(m^-,\lambda^+)$ in $\cT(\Sigma^-)\times\cML^{\perp e^-}_{<\pi}(S^+)$. By Proposition \ref{prop:algebraic_convergence}, up to replacing $(\rho_n)_n$ by a subsequence, we assume that it converges algebraically to a representation $\rho_{\infty}\in\cAH_o(S)$. The goal of this section is to show that $\rho_{\infty}$ belongs to $\cAH_o^+(S;e^-)$, i.e.\ that $\HH^3/\rho_{\infty}(\Gamma)$ has parabolic locus $P$, no new geometrically infinite end, and that it still has $L$ as its ending lamination.

First, the bottom end is controlled.

\begin{lemma}[Anderson-Lecuire]
	The bottom end of the algebraic limit $\HH^3/\rho_{\infty}(\Gamma)$ is geometrically finite on the moderate subsurface $\Sigma^-$, without new rank $1$ cusps on $\Sigma^-$, and has ending lamination $L$ in the immoderate subsurface $S^--(P\cup\Sigma^-)$.
\end{lemma}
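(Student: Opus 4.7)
The proof, following Anderson--Lecuire \cite{MR3134412}, separately handles the moderate subsurface $\Sigma^-$ and the immoderate subsurface $S^--(P\cup\Sigma^-)$.

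For the moderate part, the plan is to leverage the convergence $m^-_n\to m^-$ in $\cT(\Sigma^-)$ through Sullivan's convex hull theorem \cite{MR0903852}. The intrinsic hyperbolic metric on the component of $\partial^- C(\rho_n)$ facing $\Sigma^-$ is $K$-quasi-isometric to $m^-_n$, with $K$ a universal constant, and thus remains in a compact region of the moduli space of $\Sigma^-$. Consequently, every essential non-peripheral simple closed curve $c\subset\Sigma^-$ has length in $M_{\rho_n}$ bounded both from above (the geodesic representative is no longer than its realization on the convex core boundary) and from below (by the Margulis lemma applied to the bounded-geometry boundary surface, since a curve with length converging to $0$ cannot appear in a compact part of Teichm\"uller space). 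Algebraic convergence then yields $0<\ell_{\rho_\infty}(c)<\infty$ for every such $c$, so no new rank $1$ cusp arises on $\Sigma^-$ and the bottom end over $\Sigma^-$ remains geometrically finite with conformal boundary $m^-$.

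For the immoderate part, the arational ending lamination $L$ must be preserved. The plan is to use that, by definition of ending lamination, for each $n$ and each arational component $L'\subset L$ there is a sequence of simple closed curves $\gamma_{n,k}\to L'$ in $\cPML$ whose geodesic representatives in $M_{\rho_n}$ have uniformly bounded length and exit the bottom end corresponding to $L'$. A diagonal extraction combined with the algebraic convergence $\rho_n\to\rho_\infty$ and Bonahon's tameness theorem \cite{MR847953} produces a sequence $\gamma_k\to L'$ whose geodesic representatives in $M_{\rho_\infty}$ still have bounded length and exit a geometrically infinite end of $M_{\rho_\infty}$. Since $L'$ is arational, the resulting ending lamination must coincide with $L'$.

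The main obstacle is the passage from algebraic to geometric information, as algebraic convergence does not by itself guarantee that the end structure of $M_{\rho_n}$ descends to $M_{\rho_\infty}$: one must rule out the possibility that short geodesics exiting the ends of $M_{\rho_n}$ accumulate in the bulk of $M_{\rho_\infty}$, or that additional short curves on $\Sigma^-$ appear without being reflected in $m^-_n$. Handling this is the substantive part of Anderson--Lecuire's argument, and relies on uniform control of the thick-thin decomposition combined with the incompressibility of the surface.
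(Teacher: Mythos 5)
Your proposal is essentially the paper's proof: the paper disposes of the lemma by citing Anderson--Lecuire, namely \cite[Lemma 4.1]{MR3134412} for the moderate part and \cite[Lemma 4.2]{MR3134412} for the immoderate part, and your sketch is a faithful outline of those two lemmas (Sullivan's convex hull theorem plus length-spectrum control on $\Sigma^-$; a diagonal argument on bounded-length curves converging to the arational $L'$), with the genuinely delicate step---upgrading algebraic convergence to control of the end structure of the limit---correctly attributed to and deferred to their argument. No discrepancy to report.
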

\begin{proof}
	The moderate part of the statement is \cite[Lemma 4.1]{MR3134412} and the immoderate part is \cite[Lemma 4.2]{MR3134412}.
\end{proof}

The top end can be controlled thanks to the following result of Bonahon-Otal \cite[\S 5,6]{MR2144972} and Lecuire \cite[\S 4.1]{lecuire2025propernessbendingmap}, \cite[\S 4.2]{MR2207784}. We state a slightly weaker version, although their result also works in presence of parabolics in the top end.

\begin{proposition}[Bonahon-Otal, Lecuire]\label{prop:pleated_surfaces_convergence}
	Let $(f_n,\pi_1(S),\rho_n)_n$ be a sequence of convex pleated surfaces such that $(\rho_n)_n$ converges algebraically to some $\rho_{\infty}$. Let $(\lambda_n)_n$ be the sequence of bending laminations of those pleated surfaces. Assume that $\lambda_n\to\lambda$ in $\cML(S)$ and that the closed leaves of $\lambda_n$ and $\lambda$ have weight $<\pi$. Then $(f_n,\pi_1(S),\rho_n)_n$ converges geometrically to a convex pleated surface $(f_{\infty},\pi_1(S),\rho_{\infty})$ inside $\HH^3/\rho_{\infty}(\pi_1(S))$. In particular, the top boundary of the convex core of $\HH^3/\rho_{\infty}(\pi_1(S))$ is homeomorphic to $S^+$.
\end{proposition}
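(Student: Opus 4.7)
The plan is to realize each pleated surface via Bonahon's shear-bend cocycles on a maximal geodesic lamination extending the bending locus, and to extract a geometric limit by compactness of these cocycles.

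Passing to a subsequence (using compactness of $\cGL(S)$), fix a maximal geodesic lamination $L$ containing the supports of all $\lambda_n$ and of $\lambda$. By Theorem \ref{thm:shear_bend_bonahon}, each pleated surface $f_n$ is encoded by a shear-bend cocycle $\sigma_n + i\beta_n \in \cH(L;\C/2\pi i\Z)$; the imaginary part $\beta_n$ is determined by $\lambda_n$ and therefore converges to the bending cocycle $\beta_\infty$ of $\lambda$. The remaining task is to control the shearing cocycles $\sigma_n$, equivalently the induced hyperbolic metrics $h_n \in \cT(S)$.

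The key a priori estimate is Bridgeman's universal bound \cite{MR1621436} on $\ell_{h_n}(\lambda_n)$. Combined with the hypothesis that all closed leaves of $\lambda_n$ and $\lambda$ have weight strictly less than $\pi$, a careful analysis as in Bonahon--Otal \cite[\S 5,6]{MR2144972} and Lecuire \cite[\S 4.2]{MR2207784} shows that no essential closed curve can be pinched in $(h_n)_n$ and no subsurface of $S$ escapes to infinity. Hence, up to extraction, $h_n \to h_\infty$ in $\cT(S)$ and $\sigma_n \to \sigma_\infty$ in $\cH(L;\R)$. Applying Theorem \ref{thm:shear_bend_bonahon} to $\sigma_\infty + i\beta_\infty$ produces an equivariant pleated surface $\widetilde{f}_\infty \colon \widetilde{S} \to \HH^3$ whose holonomy is forced to be the algebraic limit $\rho_\infty$ (by continuity of $\hol$ on $\cR(L)$) and whose bending measure is $\lambda$.

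The limit $f_\infty$ is convex since the bending measures of the $f_n$ converge weakly and stay nonnegative, and its image is $\rho_\infty$-equivariant. It is contained in the boundary of the convex hull of the limit set of $\rho_\infty$, hence in the top boundary of the convex core of $M_{\rho_\infty} \coloneqq \HH^3/\rho_\infty(\pi_1(S))$. Since $f_\infty$ immerses a closed surface homeomorphic to $S^+$ into $M_{\rho_\infty}$, the top boundary of the convex core of $M_{\rho_\infty}$ is itself homeomorphic to $S^+$. The main obstacle is the uniform control on $h_n$: without the strict inequality $<\pi$ on closed bending weights, a closed curve with bending angle approaching $\pi$ could cause the pleated surface to fold onto itself, allowing the $\sigma_n$ to diverge.
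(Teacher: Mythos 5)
The paper does not prove this proposition at all: it is quoted as a known result of Bonahon--Otal \cite[\S 5,6]{MR2144972} and Lecuire \cite[\S 4.2]{MR2207784}, with no argument given. So the comparison is between your sketch and the original proofs in those references, and your sketch has a genuine gap at its very first step. You fix ``a maximal geodesic lamination $L$ containing the supports of all $\lambda_n$ and of $\lambda$,'' but such an $L$ does not exist in general: the supports of the $\lambda_n$ vary with $n$ and typically intersect each other and $\lambda$ transversely (for instance, a sequence of weighted simple closed curves converging to an irrational lamination). Convergence in $\cML(S)$ controls neither the Hausdorff limit of the supports nor their mutual disjointness, so there is no single maximal lamination in which all the bending loci sit, and hence no common space $\cH(L;\C/2\pi i\Z)$ of shear-bend cocycles in which to take a limit. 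This is not a removable technicality: Bonahon--Otal and Lecuire work instead with train-track neighbourhoods carrying all $\lambda_n$ for $n$ large, precisely because no common lamination is available.

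The second problem is that the analytic heart of the proposition --- that the induced hyperbolic metrics $h_n$ of the pleated surfaces stay in a compact subset of $\cT(S)$ (no curve is pinched, using the hypothesis that closed leaves of $\lambda$ have weight $<\pi$ together with Bridgeman's length bound), and that the limiting pleated surface lives in the \emph{algebraic} limit $\HH^3/\rho_\infty(\pi_1(S))$ rather than escaping into a strictly larger geometric limit --- is exactly what is established in the cited sections of \cite{MR2144972} and \cite{MR2207784}. Writing ``a careful analysis as in Bonahon--Otal and Lecuire shows that no essential closed curve can be pinched'' defers the entire content of the statement to the references it is quoted from, so the sketch is circular rather than a proof. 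If you want a self-contained argument you would need to reproduce their compactness argument for the induced metrics (controlling the geometry near closed leaves whose bending weight approaches some value $<\pi$, and near the irrational part of $\lambda$ via train tracks), and then verify separately that the limit surface is equivariant under $\rho_\infty$ and locally convex, from which its identification with the corresponding boundary component of the convex core follows.
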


Thus, we obtain the following, which concludes the proof that $\mm^-\times\bb^+$ is proper.
\begin{corollary}\label{cor:algebraic_limit_in_QH}
	The algebraic limit $\HH^3/\rho_{\infty}(\Gamma)$ belongs to $\cAH_o^+(S;e^-)$.
\end{corollary}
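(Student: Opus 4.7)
The plan is to prove the corollary by combining the two ingredients just stated. Since $(\rho_n)_n \subset \cAH_o^+(S;e^-)$ converges algebraically to $\rho_\infty \in \cAH_o(S)$, the task is to verify that the end invariants of $M_\infty := \HH^3/\rho_\infty(\Gamma)$ match those prescribed by $e^- = (P,L)$ on the bottom and that the top end is non-degenerated with no new parabolics.

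First I would apply the Anderson-Lecuire lemma (just stated) to read off the bottom end: the bottom end of $M_\infty$ is geometrically finite on the moderate subsurface $\Sigma^-$, has no new rank $1$ cusps on $\Sigma^-$, and retains ending lamination $L$ on $S^- - (P \cup \Sigma^-)$. In particular, the parabolic locus on the bottom is still $P$ and the topological end data is $e^- = (P,L)$.

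Next I would handle the top end using Proposition \ref{prop:pleated_surfaces_convergence}. For each $n$, the top boundary of the convex core $C(\rho_n)$ is a convex pleated surface $(f_n, \pi_1(S), \rho_n)$ homeomorphic to $S^+$ with bending lamination $\lambda^+_n = \bb^+(\rho_n)$. By hypothesis, $\lambda^+_n \to \lambda^+$ in $\cML^{\perp e^-}_{<\pi}(S^+)$, so the closed leaves of $\lambda^+_n$ and of $\lambda^+$ carry weight strictly less than $\pi$. Since $(\rho_n)_n$ converges algebraically to $\rho_\infty$, Proposition \ref{prop:pleated_surfaces_convergence} applies and produces a convex pleated surface $(f_\infty, \pi_1(S), \rho_\infty)$ inside $M_\infty$ as the geometric limit, with the top boundary of $C(\rho_\infty)$ homeomorphic to the closed surface $S^+$.

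Finally, I would conclude. The fact that $\partial^+ C(\rho_\infty)$ is homeomorphic to the closed surface $S^+$ rules out both possibilities that would place $\rho_\infty$ outside $\cAH_o^+(S;e^-)$: the top end cannot be degenerated (else $C(\rho_\infty)$ would have no top boundary component facing $S^+$), and no parabolics can appear in the top end (else the top boundary of the convex core would acquire cusps and fail to be homeomorphic to the closed $S^+$). Combined with the bottom-end information from the previous lemma, this shows $\rho_\infty \in \cAH_o^+(S;e^-)$, which is the required statement. There is no real obstacle at this stage, since the two preceding results have been tailored precisely to provide the bottom and top halves of the argument; the only thing to do is to check that the sequence $(\lambda^+_n)_n$ satisfies the hypotheses of Proposition \ref{prop:pleated_surfaces_convergence}, which follows immediately from the assumption that the limit $(m^-,\lambda^+)$ lies in $\cT(\Sigma^-) \times \cML^{\perp e^-}_{<\pi}(S^+)$.
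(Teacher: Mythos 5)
Your proposal is correct and follows exactly the route the paper intends: the corollary is stated as an immediate consequence of the Anderson--Lecuire lemma (bottom end) and Proposition \ref{prop:pleated_surfaces_convergence} (top end), and you have simply made explicit the verification of hypotheses and the final deduction that a top convex-core boundary homeomorphic to the closed surface $S^+$ excludes degeneration and new parabolics there. No discrepancy with the paper's argument.
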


Actually, since $\HH^3/\rho_{\infty}(\pi_1(S))$ has no new parabolics, this also implies that the sequence $(\rho_n)$ converges strongly to $\rho_{\infty}$ (by Anderson-Canary, see \cite[Theorem 4.6.2]{MR3586015}), although we do not need this.

\section{Real-analyticity}\label{sec:analyticity}
\begin{lemma}\label{lem:real_analyticity}
	Let $e^-=P\cup L$ be a topological end structure on $S^-$ and $(m^-,\lambda^+)\in\cT(\Sigma^-)\times\cML^{\perp e^-}_{<\pi}(S^+)$. Then $(\mm^-\times\bb^+)^{-1}(m^-,\lambda^+)$ is a real-analytic subset of $\cAH_o^+(S;e^-)$.
\end{lemma}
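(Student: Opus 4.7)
The plan is to locally realize the fibre $F\coloneqq(\mm^-\times\bb^+)^{-1}(m^-,\lambda^+)$ as the zero set of a real-analytic map built from Bonahon's shear-bend cocycles.

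By Theorems \ref{thm:ending_lamination_theorem} and \ref{thm:general_uniformization}, simultaneous uniformization gives a biholomorphism $U\colon\cT(\Sigma^-)\times\cT(S^+)\overset{\cong}{\to}\cAH_o^+(S;e^-)$ whose first projection is $\mm^-$, so $(\mm^-)^{-1}(m^-)=U(\{m^-\}\times\cT(S^+))$ is a complex submanifold of $\cAH_o^+(S;e^-)$. It suffices to prove that $F\subset(\mm^-)^{-1}(m^-)$ is a real-analytic subset. Fix a maximal geodesic lamination $\hat{L}\subset S^+$ containing the support $|\lambda^+|$. For each $\rho\in F$, the top side of the convex core of $M_\rho$ is a convex pleated surface with pleating locus $|\lambda^+|\subset\hat{L}$, which after straightening leaves in the flat pieces realizes $\hat{L}$; hence $F\subset V\coloneqq\cR(\hat{L})\cap(\mm^-)^{-1}(m^-)$, an open subset of the complex submanifold. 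By Theorem \ref{thm:shear_bend_bonahon} the shear-bend cocycle $\sigma+i\beta$ is holomorphic on $\cR(\hat{L})$, so the bending part $\beta\colon V\to\cH(\hat{L};\R/2\pi\Z)$ is real-analytic. Let $\beta_0\in\cH(\hat{L};\R/2\pi\Z)$ denote the cocycle obtained from $\lambda^+$ viewed as a transverse measure to $\hat{L}$ supported on $|\lambda^+|$; the hypothesis that closed leaves of $\lambda^+$ carry weight strictly less than $\pi$ makes $\beta_0$ unambiguously determined.

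The goal is then to show that near every $\rho_0\in F$ the fibre coincides with the real-analytic zero set $\beta^{-1}(\beta_0)$. The inclusion $F\subset\beta^{-1}(\beta_0)$ is global on $V$: for $\rho\in F$, the top convex core boundary is the unique equivariant pleated surface realizing $\hat{L}$, and its bending cocycle is $\beta_0$ by construction. The reverse inclusion is the main obstacle: a representation $\rho\in V$ with $\beta(\rho)=\beta_0$ only yields an abstract convex equivariant pleated surface $f_\rho\colon\widetilde{S^+}\to\HH^3$ with bending $\lambda^+$, which is not a priori the lift of the top convex core boundary of $M_\rho$. At $\rho=\rho_0$ the two coincide, and $f_\rho$ varies continuously with $\rho$ through the holomorphic shear-bend parameterization, while the top convex core boundary of $M_\rho$ varies continuously by Proposition \ref{prop:continuity_bending_map}. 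Combined with the uniqueness of a locally convex equivariant pleated surface in a small $C^0$-neighborhood of an embedded convex pleated surface in $\HH^3$, this forces $f_\rho$ to be the lift of the top convex core boundary of $M_\rho$ for all $\rho$ in some neighborhood $V_0$ of $\rho_0$ in $V$. Hence $F\cap V_0=\beta^{-1}(\beta_0)\cap V_0$, so $F$ is locally the zero set of the real-analytic map $\rho\mapsto\beta(\rho)-\beta_0$ and is therefore a real-analytic subset of $\cAH_o^+(S;e^-)$.
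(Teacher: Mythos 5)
Your proposal follows essentially the same route as the paper: split the fibre via the uniformization/ending-lamination identification of $\cAH_o^+(S;e^-)$ with $\cT(\Sigma^-)\times\cT(S^+)$, then cut out $(\bb^+)^{-1}(\lambda^+)$ locally as a level set of the bending part $\beta$ of Bonahon's holomorphic shear-bend cocycle on $\cR(\hat L)$. The only divergence is that you establish the identification $F=\beta^{-1}(\beta_0)$ only locally near each $\rho_0\in F$ via a continuity/uniqueness argument for convex pleated surfaces, whereas the paper asserts the corresponding equality on all of $\cR(\hat L)$; your localized version is all that real-analyticity requires, and is arguably the more careful treatment of the one delicate point (that a locally convex equivariant pleated surface with bending $\lambda^+$ is actually the top convex core boundary).
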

\begin{proof}
	The fibre $F\coloneqq(\mm^-\times\bb^+)^{-1}(m^-,\lambda^+)$ is the intersection of $(\mm^-)^{-1}(m^-)$ and $(\bb^+)^{-1}(\lambda^+)$, hence it suffices to show that each of these two subsets of $\cAH_o^+(S;e^-)$ is real-analytic.

	First, Theorems \ref{thm:general_uniformization} and \ref{thm:ending_lamination_theorem} say that $\cAH_o^+(S;e^-)$ is a complex regular submanifold of an open subset of $\cX(S)$, biholomorphically homeomorphic to $\cT(\Sigma^-)\times\cT(S^+)$. In particular, $(\mm^-)^{-1}(m^-)$ is a complex regular submanifold of an open subset of $\cX(S)$, biholomorphically homeomorphic to $\cT(S^+)$.

	Pick a maximal geodesic lamination $L$ containing the support of $\lambda^+$. By Bonahon's shear-bend parameterization Theorem \ref{thm:shear_bend_bonahon}, the open subset $\cR(L)\subset\cX(S)$ consisting of those representations realizing a pleated surface with pleating locus contained in $\lambda^+$ is biholomorphically homeomorphic to an open subset of $\cH(L;\C/2\pi i\Z)$ via the shear-bend coordinates $\sigma+i\beta$. For a generic $\rho\in\cR(L)$, the associated pleated surface $P_{\rho}$ lies inside the convex core of $\HH^3/\rho(\pi_1(S))$ and is neither convex nor concave, in particular $\beta(\rho)$ takes positive and negative values in $[-\pi,\pi]$, i.e.\ it does not correspond to a transverse measure. However, if $\rho\in\cR(L)$ does realize $P_{\rho}$ as the upper boundary of the convex core of $\HH^3/\rho(\pi_1(S))$, then $\beta(\rho)$ corresponds to a transverse measure with support contained in $L$. In particular, for $\rho_0\in\cAH_o^+(S;e^-)$ with $\bb^+(\rho_0)=\lambda^+$, the real-analytic subset $\beta^{-1}(\beta(\rho_0))\subset\cR(L)$ consists of those representations realizing a pleated surface with the same bending cocycle as $\rho_0$, that is, in this case, a convex pleated surface with bending lamination $\lambda^+$. In other words, $(\bb^+)^{-1}(\lambda^+)\cap\cR(L)=\beta^{-1}(\beta(\rho_0))\cap\cR(L)$. Since $\cR(L)$ is open in $\cX(S)$, this shows that $(\bb^+)^{-1}(\lambda^+)$ is locally real-analytic around $\rho_0$. Thus it is real-analytic.
\end{proof}

In fact, $\cAH_o^+(S;e^-)\subset\cR(L)$ as follows from the proof of the density theorem by Namazi-Souto \cite{MR3001608} and Ohshika \cite{MR2821565}~: only the ending laminations are unrealizable.

For our purpose, the fact that the fibres of $\mm^-\times\bb^+$ are real-analytic is important for two reasons, recollected in the following two results.

\begin{lemma}\label{lem:analytic_is_ANR}
	Compact real-analytic spaces are absolute neighborhood retracts (see \S \ref{sec:fibres_of_limit}).
\end{lemma}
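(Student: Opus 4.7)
The plan is to reduce the statement to two classical ingredients: the triangulability of compact real-analytic sets (Lojasiewicz) and the fact that finite simplicial complexes are absolute neighborhood retracts (Borsuk). Since ``being an ANR'' is a topological invariant of the underlying metric space, combining these two facts yields the lemma at once.

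For the first ingredient, recall that a real-analytic space is locally the zero set of finitely many real-analytic functions on an open subset of some $\R^n$, and is in particular a semianalytic set. By Lojasiewicz's triangulation theorem, any compact semianalytic set is homeomorphic to the geometric realization $|K|$ of a finite simplicial complex $K$. In the situation of Lemma \ref{lem:real_analyticity}, the relevant real-analytic spaces are closed real-analytic subsets of a real-analytic submanifold of the character variety, so this hypothesis is clearly satisfied.

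For the second ingredient, every finite simplicial complex is a compact metric ANR. A standard argument embeds $|K|$ piecewise linearly as a subcomplex of a triangulation of some $\R^N$ and builds a retraction from a simplicial neighborhood of $|K|$ in $\R^N$ onto $|K|$ using the open star cover and barycentric projections; equivalently, a finite CW-complex is locally contractible and finite-dimensional, and these properties characterize compact ANRs among compact metrizable spaces.

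Combining these two inputs, any compact real-analytic space $X$ is homeomorphic to a finite simplicial complex, hence to a compact ANR, and therefore $X$ itself is an ANR. No step in this plan looks genuinely delicate; the only point worth being careful about is to invoke the version of Lojasiewicz's theorem valid for compact semianalytic subsets of some $\R^N$, rather than the trivial statement for real-analytic manifolds, which is precisely the classical formulation.
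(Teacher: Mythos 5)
Your proposal is correct and matches the paper's (one-line) proof in substance: the paper simply combines Sullivan's result on the local cone structure/triangulability of real-analytic spaces with a standard criterion for being an ANR (finite simplicial complexes, or equivalently compact, finite-dimensional, locally contractible metric spaces, are ANRs), which are exactly your two ingredients. No gap.
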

\begin{proof}
	Combine \cite{MR278333} with \cite[Corollary 8A]{MR0872468} or see \cite[Lemma 4.8]{dularschlenker2024pleating}.
\end{proof}

\begin{theorem}[Borel-Haefliger \cite{MR149503}, Sullivan \cite{MR278333}]\label{thm:fundamental_class}
	A compact real-analytic space $X$ has a fundamental class modulo two, i.e.\ a nonzero homology class in $H_{\dim(X)}(X;\Z/2)$.
\end{theorem}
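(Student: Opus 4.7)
The plan is to construct an explicit cycle $[X]_2 \in Z_n(X;\Z/2)$ out of a triangulation of $X$ and then verify that it is nonzero, where $n = \dim X$. First I would invoke Lojasiewicz's triangulation theorem to equip $X$ with a semi-analytic triangulation compatible with its canonical stratification $X = X_n \supset X_{n-1} \supset \cdots \supset X_0$, where $X_{\mathrm{reg}} \coloneqq X_n \setminus X_{n-1}$ is a real-analytic manifold of dimension $n$, open and dense in $X$. I would then set $[X]_2 \coloneqq \sum_\sigma \sigma$, summed with $\Z/2$ coefficients over all top-dimensional ($n$-dimensional) simplices of the triangulation.

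The heart of the argument is showing $\partial [X]_2 = 0$. For an $(n-1)$-simplex $\tau$, the coefficient of $\tau$ in $\partial[X]_2$ is the number, modulo $2$, of top-dimensional simplices containing $\tau$ as a face; equivalently, the cardinality $k(\tau)$ of the combinatorial link of $\tau$. For a point $p$ in the interior of $\tau$, the topological link $L_p$ of $p$ in $X$ has the form of a join $\partial\tau * \mathrm{link}(\tau) \cong S^{n-2} * \{k(\tau)\text{ points}\}$, and a short calculation using the join formula $\chi(A*B) = \chi(A) + \chi(B) - \chi(A)\chi(B)$ shows $\chi(L_p) \equiv k(\tau) \pmod 2$ in both parities of $n$. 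Thus the problem reduces to the local statement that every point of $X$ has a link of even Euler characteristic.

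The main obstacle is precisely this local statement, which is Sullivan's even-link theorem for real-analytic spaces. Its proof proceeds by induction on stratum dimension using the local structure of real-analytic singularities (semi-analytic normal forms, or resolution of singularities), and I would import it as a black box from \cite{MR278333}. Granting it, $k(\tau) \equiv 0 \pmod 2$ for every $(n-1)$-simplex $\tau$, so $[X]_2$ is a genuine cycle and defines a class $[X]_2 \in H_n(X;\Z/2)$, independent of the triangulation by the standard comparison between simplicial and singular chains.

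Finally, non-vanishing is straightforward. Choosing an open Euclidean chart $U$ inside the smooth manifold $X_{\mathrm{reg}}$, the image of $[X]_2$ under the natural map $H_n(X;\Z/2) \to H_n(X, X \setminus U;\Z/2) \cong \Z/2$ coincides with the standard mod-$2$ fundamental class of the open $n$-manifold $U$, which is the generator. Hence $[X]_2 \neq 0$ in $H_n(X;\Z/2)$, completing the argument conditional on Sullivan's local theorem.
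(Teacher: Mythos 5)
The paper does not actually prove this statement: it is quoted as a black box, with the content attributed to Borel--Haefliger \cite{MR149503} and Sullivan \cite{MR278333}. Your outline is a correct reconstruction of the standard combinatorial derivation of the global statement from Sullivan's local theorem: the identification of the coefficient of an $(n-1)$-simplex $\tau$ in $\partial[X]_2$ with $k(\tau)$, the computation $\chi\bigl(\partial\tau * \mathrm{lk}(\tau)\bigr)\equiv k(\tau)\pmod 2$ via the join formula (which also handles the degenerate case $k(\tau)=0$ of a maximal $(n-1)$-simplex), and the non-vanishing via the local homology of a regular point are all sound, and you correctly isolate the one genuinely hard ingredient --- that links of points of real-analytic sets have even Euler characteristic --- as the piece to be imported from \cite{MR278333}. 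Two minor remarks: the claim that $X_{\mathrm{reg}}$ is dense in $X$ can fail for real-analytic sets (there may be lower-dimensional pieces), but this is harmless since your argument only uses one top-dimensional chart and the boundary computation is local at each $(n-1)$-simplex; and independence of the triangulation, while true, is not needed for the statement as used in the paper, which only requires the existence of some nonzero class in $H_{\dim X}(X;\Z/2)$. For context, Borel and Haefliger's original construction is sheaf-theoretic rather than simplicial, so your route is the Sullivan-style combinatorial one; either is acceptable here.
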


\section{Injectivity}\label{sec:injectivity}
As mentioned in the introduction, a key ingredient in showing the main result is to first show the following partial result, where the parabolic locus and ending lamination are empty.

\begin{theorem}\label{thm:prescribed_metric_and_bending}
	Consider the map $\mm^-\times\bb^+\colon\cQF_o(S)\to\cT(S^-)\times\cML_{<\pi}(S^+)$ sending a quasifuchsian representation $\rho$ to the pair $(\mm^-(\rho),\bb^+(\rho))$ of the conformal structure at infinity of the bottom end and the pleating measured lamination of the top boundary of the convex core of $\HH^3/\rho(\pi_1(S))$. It is a homeomorphism.
\end{theorem}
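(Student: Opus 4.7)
Both $\cQF_o(S)$ and $\cT(S^-)\times\cML_{<\pi}(S^+)$ are topological manifolds of the same real dimension $12g-12$, so by invariance of domain it suffices to prove that $\mm^-\times\bb^+$ is continuous, proper, and injective. The strategy parallels the outline of \S \ref{sec:outline}, but with the entire analysis taking place in the convex-cocompact interior; this theorem will in fact feed into step \eqref{item:injectivity} of the proof of the main result.

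Continuity of $\mm^-$ is part of Ahlfors--Bers simultaneous uniformization, and continuity of $\bb^+$ on $\cQF_o(S)\subset\cAH_o^+(S)$ is Proposition \ref{prop:continuity_bending_map}. For properness, let $(\rho_n)\subset\cQF_o(S)$ with $(\mm^-(\rho_n),\bb^+(\rho_n))\to(m^-,\lambda^+)$. If $(\rho_n)$ had no convergent subsequence, then by Morgan--Shalen (Theorem \ref{thm:Morgan-Shalen}), up to extraction, $\rho_n$ would converge to an action of $\pi_1(S)$ on a real tree $\cT_\mu$ dual to a nonzero $\mu\in\cML(S)$, with rescalings $\epsilon_n\searrow 0$ and $\epsilon_n\ell_{\rho_n}(\gamma)\to i(\mu,\gamma)$ for all $\gamma$. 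Sullivan's convex hull theorem provides a universal $K$-quasi-isometry between $(S^-,m^-_n)$ and the bottom boundary of the convex core of $\rho_n$, yielding uniform upper bounds $\ell_{\rho_n}(c)\leq K\ell_{m^-_n}(c)$ for every essential closed curve $c\subset S$; since $(m^-_n)$ is bounded in $\cT(S^-)$, picking $c$ with $i(\mu,c)>0$ contradicts $\epsilon_n\ell_{\rho_n}(c)\to i(\mu,c)>0$. Thus $\rho_n$ converges algebraically to some $\rho_\infty\in\cAH_o(S)$; a lower bound on the injectivity radius of $(S^-,m^-_n)$ prevents new parabolics (Sullivan again), and Proposition \ref{prop:pleated_surfaces_convergence} applied to the top convex pleated surface keeps the top end non-degenerate, so $\rho_\infty\in\cQF_o(S)$.

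Injectivity is the main technical step, and follows the strategy of \cite{dularschlenker2024pleating}. The fibre $F\coloneqq(\mm^-\times\bb^+)^{-1}(m^-,\lambda^+)$ is compact by properness. Fix any maximal geodesic lamination $L$ containing $|\lambda^+|$; every quasi-Fuchsian group realizes $L$, so $F\subset\cR(L)\cap\cQF_o(S)$. On this open set, $(\mm^-)^{-1}(m^-)$ is a complex-analytic submanifold by Ahlfors--Bers and $(\bb^+)^{-1}(\lambda^+)$ is cut out by real-analytic equations in Bonahon's shear-bend coordinates (Theorem \ref{thm:shear_bend_bonahon}); hence $F$ is a compact real-analytic space. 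Lemma \ref{lem:analytic_is_ANR} makes $F$ an ANR, and Theorem \ref{thm:fundamental_class} endows it with a nonzero fundamental class in $H_{\dim F}(F;\Z/2)$. To force $\dim F=0$, and hence that $F$ is a single point, it suffices by the variant of Finney's theorem (Theorem \ref{thm:fibres_are_weakly_contractible}) to exhibit $\mm^-\times\bb^+$ as the continuous limit of a family of injective open embeddings $\Phi_t\colon\cQF_o(S)\to\cT(S^-)\times\cML_{<\pi}(S^+)$ indexed by $t\in[0,1)$.

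The hard part is constructing this family $(\Phi_t)$. Following \cite{dularschlenker2024pleating}, a natural candidate is a Thurston quakebend interpolation on the top end: deform the top projective structure of $\rho$ by a fraction $t$ of its bending datum, thereby interpolating between the conformal uniformization at $t=0$ (where injectivity reduces to Ahlfors--Bers) and $\mm^-\times\bb^+$ at $t=1$. Verifying that each intermediate $\Phi_t$ is a local homeomorphism--so that invariance of domain globalizes it to an injective open embedding--amounts to a Jacobian computation along the quakebend flow, and is essentially the content of \cite{dularschlenker2024pleating} transposed from the $\bb^-\times\bb^+$ setting to the present mixed setting; this is the genuinely new analytic input needed. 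Once it is established, Finney's theorem combined with the fundamental class argument forces $F$ to be a singleton and concludes the proof.
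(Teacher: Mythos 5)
Your skeleton (continuity $+$ properness $+$ compact real-analytic ANR fibres $+$ fundamental class mod $2$ $+$ the Finney-type Theorem \ref{thm:fibres_are_weakly_contractible}) is exactly the paper's, and your properness argument via Morgan--Shalen and Sullivan's convex hull theorem matches Proposition \ref{prop:properness} specialized to empty $e^-$. The gap is in the one step you yourself flag as ``the hard part'': the construction of the approximating family of open embeddings $\Phi_t$ converging to $\mm^-\times\bb^+$. You propose a quakebend interpolation of the top projective structure by a fraction $t$ of its own bending datum and assert that injectivity of each $\Phi_t$ ``amounts to a Jacobian computation along the quakebend flow.'' This is not established anywhere, and it is not what \cite{dularschlenker2024pleating} does either; as written it is also ill-posed, since you do not say what the second coordinate of $\Phi_t$ is as an element of the fixed target $\cT(S^-)\times\cML_{<\pi}(S^+)$ (Theorem \ref{thm:fibres_are_weakly_contractible} requires all the $f_n$ and $f_\infty$ to map into the same manifold $N'$), and there is no reason a partially quakebent pleated surface remains embedded or convex, so injectivity of each $\Phi_t$ would be a substantial new theorem, not a routine computation.

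The paper instead takes the approximating homeomorphisms off the shelf: by Labourie's foliation of the ends by constant curvature $K$ surfaces, one has the smooth bending data $\III^+_K$, and Chen--Schlenker \cite{chen2022geometric} prove that $\mm^-\times\III^+_K\colon\cQF_o(S)\to\cT(S^-)\times\cT^{K^*}(S^+)$ is a homeomorphism for each $K\in(-1,0)$ (this is the mixed analogue of Schlenker's $\III^-_K\times\III^+_K$ result used in \cite{dularschlenker2024pleating}, and it is the genuinely hard input --- proved by geometric-analytic methods, not by a flow computation). The targets are then identified with $\cML(S^+)$ via Thurston's earthquake map based at a fixed $m_0$, producing maps $V_K$ with $\mm^-\times V_K$ a homeomorphism, and the convergence $V_K\to\bb^+$ uniformly on compacts as $K\to -1$ follows from Bonsante--Mondello--Schlenker and Belraouti. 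If you replace your quakebend paragraph by this citation chain, your proof coincides with the paper's; without it, the injectivity step is unproved.
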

The proof is very similar to the proof of the injectivity of $\bb^-\times\bb^+$ \cite{dularschlenker2024pleating}. After proving it, we conclude the proof of the main result.

\subsection{Prescribed third fundamental form}
Given $\rho\in\cQF_o(S)$, a theorem of Labourie \cite{MR1163450} states that each end of $\HH^3/\rho(\pi_1(S))$ is uniquely foliated by smooth closed subsurfaces of constant curvature equal to $K$, with $K$ contained in the interval $(-1,0)$. The third fundamental form of such a subsurface has constant curvature $K^*\coloneqq K/(K+1)$. Thus it gives a point in the \emph{rescaled Teichm\"uller space} $\cT^{K^*}(S)$, which is the space of isotopy classes of metrics of constant curvature $K^*$ on $S$. Focusing on the top end, it follows that, for each $K\in (-1,0)$, one can define the map
\begin{equation}\label{eq:III}
	\III^+_K\colon\cQF_o(S)\to\cT^{K^*}(S)
\end{equation}
sending a quasifuchsian representation $\rho$ to the third fundamental form of the constant curvature $K$ surface in the top end of $\HH^3/\rho(\pi_1(S))$. The third fundamental form $\III^+_K$ is a ``smooth measure of bending'' and Chen-Schlenker proved that the analogous ``smooth version'' of Proposition \ref{thm:prescribed_metric_and_bending} holds (see Corollary 1.4 and the proof of Lemma 4.1 in \cite{chen2022geometric}). We need the following special case of their result, which is more general. 

\begin{theorem}[Chen-Schlenker \cite{chen2022geometric}]
	The map $\mm^-\times\III^+_K\colon\cQF_o(S)\to\cT(S^-)\times\cT^{K^*}(S^+)$ is a homeomorphism.
\end{theorem}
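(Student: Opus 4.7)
The plan is to establish that $\mm^-\times\III^+_K$ is continuous, proper and injective, and then invoke invariance of domain. The source $\cQF_o(S)$ and the target $\cT(S^-)\times\cT^{K^*}(S^+)$ are both real-analytic manifolds of dimension $12g-12$, so any continuous proper injection between them is automatically a homeomorphism. This mirrors the structure of the proof of Theorem \ref{thm:prescribed_metric_and_bending} for the bending version, replacing the bending lamination by the third fundamental form of a K-surface.

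For continuity, I would rely on Labourie's result that each end of a quasi-Fuchsian manifold carries a unique smooth foliation by surfaces of constant Gauss curvature $K\in(-1,0)$, constructed as the solution of a quasi-linear elliptic equation. The implicit function theorem applied to this equation, together with smooth dependence of the hyperbolic structure on $\rho$, shows that the $K$-surface in the top end depends smoothly on $\rho$; hence its third fundamental form $\III^+_K(\rho)\in\cT^{K^*}(S^+)$ depends continuously on $\rho$. The continuity (in fact holomorphicity) of $\mm^-$ is the Ahlfors-Bers simultaneous uniformization theorem.

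For properness, consider a sequence $(\rho_n)\subset\cQF_o(S)$ with $\mm^-(\rho_n)\to m^-_\infty$ and $\III^+_K(\rho_n)\to\tau^+_\infty$, and suppose for contradiction that $(\rho_n)$ escapes every compact subset of $\cQF_o(S)$. By Theorem \ref{thm:Morgan-Shalen}, up to subsequence $(\rho_n)$ converges to a small action on a real tree $\cT_\mu$ dual to some $\mu\in\cML(S)$, so $\epsilon_n\ell_{\rho_n}(\gamma)\to i(\mu,\gamma)$ with $\epsilon_n\searrow 0$. Since $\III^+_K$ has curvature $K^*=K/(K+1)$ and the principal curvatures of the K-surface satisfy $k_1k_2=K+1>0$, the first and third fundamental forms $I^+_K$ and $\III^+_K$ are uniformly bi-Lipschitz; in particular $I^+_K(\rho_n)$ stays bounded in moduli space. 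Combined with Sullivan's quasi-isometry between the conformal boundary and the convex core boundary, boundedness of $\mm^-(\rho_n)$ controls $\ell_{\rho_n}$ on any curve crossing $S^-$, and boundedness of $I^+_K(\rho_n)$ controls $\ell_{\rho_n}$ on any curve crossing $S^+$; but $\mu$ must intersect some simple closed curve, giving a contradiction with \eqref{eq:curve_length_infinity}-style bounds. This rules out degeneration and gives properness.

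The hard part will be injectivity, which is where the genuine geometric content sits. The strategy I would pursue is variational: construct a K-renormalized volume functional $W_K\colon\cQF_o(S)\to\R$, obtained by integrating between the K-surface in the top end and a suitably normalized bottom boundary, and establish a Schläfli-type formula of the form
\[
dW_K \;=\; \tfrac{1}{2}\langle \III^+_K,\,\cdot\,\rangle \;+\; \tfrac{1}{2}\langle \mathrm{schw}(\mm^-),\,\cdot\,\rangle,
\]
pairing the data $(\mm^-,\III^+_K)$ with variations via the usual cotangent identifications of $\cT(S^-)$ and $\cT^{K^*}(S^+)$. After an appropriate Legendre transform, strict concavity of $W_K$ (reflecting the strict stability of the K-surface, which is an immersed minimal-type surface for the associated energy) would force the gradient map to be injective, yielding injectivity of $\mm^-\times\III^+_K$. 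An alternative route, more in the spirit of \cite{dularschlenker2024pleating} and the injectivity argument outlined in \S\ref{sec:outline}, is to prove injectivity infinitesimally using a maximum-principle argument on the Jacobi operator of the K-surface with boundary data controlled by $\mm^-$, and then globalize via the real-analyticity of the map together with a fibre-dimension argument. Either way, once continuity, properness and injectivity are in hand, invariance of domain finishes the proof.
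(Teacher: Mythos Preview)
The paper does not prove this statement at all: it is quoted as a result of Chen--Schlenker \cite{chen2022geometric} (with a pointer to their Corollary~1.4 and the proof of their Lemma~4.1) and used as a black-box input to the argument of \S\ref{sec:injectivity}. There is therefore no ``paper's own proof'' to compare your attempt against.

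Regarding the proposal on its own merits: the continuity sketch is fine, but the rest has real gaps. Your properness argument only treats the Morgan--Shalen alternative (divergence to a tree) and never addresses the case where $(\rho_n)$ converges algebraically in $\cAH_o(S)$ to a point outside $\cQF_o(S)$; you would still need to rule out new parabolics or degenerate ends. Also, the claim that $I^+_K$ and $\III^+_K$ are \emph{uniformly} bi-Lipschitz is not justified: the relation $k_1k_2=K+1$ fixes the product of principal curvatures but not each one separately, so without an a priori bound on the shape operator you cannot pass from bounded $\III^+_K$ to bounded $I^+_K$. Most importantly, the injectivity section is a plan rather than a proof: you propose either a Schl\"afli/renormalized-volume argument with strict concavity after a Legendre transform, or an infinitesimal-rigidity argument via a maximum principle, but you do not carry out either. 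Those steps are exactly the substance of Chen--Schlenker's paper, and asserting that they ``would'' work is not a substitute for doing them.
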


This is a mixed version of the earlier result of Schlenker that $\III^-_K\times\III^+_K$ is a homeomorphism \cite{MR2208419}, which was an essential ingredient in \cite{dularschlenker2024pleating}.

These smooth analogues of $\mm^-\times\bb^+$ actually approximate $\mm^-\times\bb^+$, as follows from the following result of Bonsante-Mondello-Schlenker \cite{MR3035326} and Belraouti \cite{MR3704814}. See the version stated in \cite{dularschlenker2024pleating}.
\begin{proposition}
	When $K\to -1$, the maps $\III^+_K$ converge to $\bb^+$ in the length spectrum, uniformly on compacts. More precisely, $\ell_{\III^+_K(\rho)}(c)\to i(\lambda^+(\rho),c)$ when $K\to -1$, for each closed curve $c$ in $S$ and each $\rho\in\cQF_o(S)$, and the convergence is uniform for $\rho$ in a compact subset of $\cQF_o(S)$.
\end{proposition}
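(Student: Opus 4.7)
The plan is to use the identification of the third fundamental form $\III^+_K$ of a smooth strictly convex surface $S_K \subset \HH^3$ with the pullback, via its Gauss map, of the induced metric on the Gauss image inside de Sitter space, and then to analyze how this Gauss image degenerates as $K \to -1$. For each point $p \in S_K$, the Gauss map $G_K$ sends $p$ to the endpoint of the outward unit normal geodesic at the appropriate dual time; this map is equivariant with respect to $\rho$, so all computations descend to $M_\rho \coloneqq \HH^3/\rho(\pi_1(S))$.

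The geometric input is the following: as $K \to -1$, the $K$-surface $S_K$ foliating the top end of $M_\rho$ converges in Hausdorff distance to the top boundary $\partial^+ C(\rho)$ of the convex core. Dually, the Gauss images $G_K(S_K)$ converge (in an appropriate equivariant Hausdorff sense) to the dual of the pleated surface $\partial^+ C(\rho)$: each totally geodesic flat piece collapses to a single point of de Sitter space, while each leaf of the bending lamination $\lambda^+(\rho)$ of weight $w$ unfolds into a spacelike geodesic arc of length $w$. Consequently, the length in $\III^+_K(\rho)$ of a closed curve $c$ should converge to the intersection number $i(c, \lambda^+(\rho))$.

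To make this rigorous, I would partition $S_K$ into \emph{flat regions} close to the interiors of the totally geodesic pieces of $\partial^+ C(\rho)$ and \emph{tubular regions} near the bending locus. On a flat region, the Gauss equation $k_1 k_2 = 1 + K$ combined with uniform bounds on the mean curvature forces the shape operator $A_K$ to satisfy $\|A_K\| \to 0$, so any arc there contributes $o(1)$ to the $\III^+_K$-length (since $\III = \mathrm{I}(A\cdot,A\cdot)$). On a tubular region around a bending leaf of weight $w$, a direct model computation---using the explicit constant-curvature surfaces interpolating between two half-planes meeting along a geodesic at dihedral angle $w$---shows that a transverse arc has $\III^+_K$-length converging to $w$. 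Summing over intersections of $c$ with $\lambda^+(\rho)$, and then passing from weighted multicurves to general measured laminations via continuity of both sides in $\lambda^+$, yields $\ell_{\III^+_K(\rho)}(c) \to i(c, \lambda^+(\rho))$.

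The main obstacle will be sharpening the Hausdorff convergence of $G_K(S_K)$ to the dual of $\partial^+ C(\rho)$ enough to pass integrals along arbitrary closed curves to the limit, especially when $\lambda^+(\rho)$ has transverse support of Cantor type; a soft convergence of supports is not enough, one needs quantitative control on how fast the ``scalloped'' regions concentrate along $\lambda^+(\rho)$. Uniformity on a compact subset $\cK \subset \cQF_o(S)$ then follows because every estimate above depends only on continuous invariants of $\rho$---the injectivity radius, the diameter and geometry of $C(\rho)$, and the continuous bending map $\rho \mapsto \lambda^+(\rho)$---each of which is bounded on $\cK$; a standard diagonal extraction upgrades pointwise convergence to uniform convergence on $\cK$.
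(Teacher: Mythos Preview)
The paper does not prove this proposition; it quotes it as a known result of Bonsante--Mondello--Schlenker \cite{MR3035326} and Belraouti \cite{MR3704814}, with a pointer to the formulation in \cite{dularschlenker2024pleating}. So there is no proof in the paper to compare against; your proposal is an attempt at an independent argument.

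Your geometric picture is the right one: the de Sitter dual of $\partial^+ C(\rho)$ is exactly the $\R$-tree where flat pieces collapse and bending leaves open into arcs of length equal to their weight, and $\III^+_K$ is the pullback of the de Sitter metric by the Gauss map. The partition into flat and tubular regions, with the shape operator going to zero on the former and a model computation on the latter, is also the natural strategy and works cleanly when $\lambda^+(\rho)$ is a weighted multicurve.

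But you have correctly identified the gap and not closed it. When $\lambda^+(\rho)$ is irrational, there are no tubular regions in the naive sense: the bending locus is a Cantor set of leaves and the ``flat pieces'' are ideal triangles accumulating on it. Your proposed fix---``passing from weighted multicurves to general measured laminations via continuity of both sides in $\lambda^+$''---does not work as stated, because $\lambda^+(\rho)$ is determined by $\rho$, not a free variable you can approximate while holding $\rho$ fixed. To run a density argument you would need uniform control of $\ell_{\III^+_K(\rho)}(c)$ as $\rho$ varies over a sequence with rational bending converging to the given $\rho$, which is exactly the uniformity you are trying to prove. The actual proofs in the cited references proceed differently (via asymptotic analysis of the level sets of cosmological time in the dual spacetime, or via direct curvature estimates on the $K$-surfaces that are uniform in the lamination), and this is where the real work lies. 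Your final paragraph on uniformity is also too soft: a diagonal extraction gives convergence along a subsequence, not uniform convergence on compacts; you need equicontinuity, which again requires the quantitative estimates you have flagged but not supplied.
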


In order to apply the variant of Finney's theorem as in the next section, we identify the targets of the maps $\III^+_K$, for $K\in (-1,0)$, and $\bb^+$, via the earthquake map $E\colon\cML(S)\times\cT(S)\to\cT(S)\colon (\lambda,m)\mapsto E(\lambda,m)$ as in \cite[\S 4.1]{dularschlenker2024pleating}. More precisely, we fix a point $m_0\in\cT(S)$ and consider, for each $K\in (-1,0)$, the map
\begin{equation}
	V_K\colon\cQF_o(S)\to\cML(S^+)\colon \rho\mapsto\frac{1}{\sqrt{\left|K^*\right|}}E(-,m_0)^{-1}(\left|K^*\right|\III^+_K(\rho)),
\end{equation}
which is such that $\mm^-\times V_K$ is a homeomorphism since the earthquake map is a homeomorphism by Thurston's earthquake theorem \cite{MR903860}.
Using the estimate \cite[Lemma 4.4]{dularschlenker2024pleating}, we obtain the following.
\begin{corollary}[\!\!{\cite[Lemma 4.2]{dularschlenker2024pleating}}]
	The maps $V_K\colon\cQF_o(S)\to\cML(S^+)$ converge to $\bb^+$ uniformly on compacts, when $K\to -1$ (with respect to the induced metric when embedding $\cML(S^+)$ inside $\R^{\{c_1,\dots,c_N\}}$ for a large enough collection of closed curves).
\end{corollary}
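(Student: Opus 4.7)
The plan is to combine the length-spectrum convergence $\ell_{\III^+_K(\rho)}(c)\to i(\bb^+(\rho),c)$ from the preceding Proposition with the earthquake estimate of \cite[Lemma 4.4]{dularschlenker2024pleating}, which bounds $|i(\lambda,c)-\ell_{E(\lambda,m_0)}(c)|$ by a constant depending only on $m_0$ and $c$, uniformly in $\lambda\in\cML(S^+)$. The rescaling factor $|K^*|\to\infty$ built into $V_K$ will render this uniform error negligible in the limit $K\to -1$, so that convergence of intersection numbers with any fixed finite family of test curves follows.

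First I would fix a compact $\cK\subset\cQF_o(S)$ and a closed curve $c\subset S^+$, and introduce the auxiliary quantities $m_K(\rho)\coloneqq |K^*|\,\III^+_K(\rho)$ and $\lambda_K(\rho)\coloneqq E(-,m_0)^{-1}(m_K(\rho))$. Since $\III^+_K(\rho)$ has constant curvature $K^*<0$, multiplication by $|K^*|$ produces a hyperbolic metric, and the standard rescaling of lengths under multiplication of the metric tensor by a positive scalar gives $\ell_{m_K(\rho)}(c)=\sqrt{|K^*|}\,\ell_{\III^+_K(\rho)}(c)$. The definition of $V_K$ likewise yields $i(V_K(\rho),c)=|K^*|^{-1/2}\,i(\lambda_K(\rho),c)$. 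The estimate of \cite[Lemma 4.4]{dularschlenker2024pleating}, applied to $\lambda=\lambda_K(\rho)$, reads $|i(\lambda_K(\rho),c)-\ell_{m_K(\rho)}(c)|\leq C$ with $C=C(m_0,c)$ independent of $\rho$ and $K$. Dividing by $\sqrt{|K^*|}$ and substituting the two preceding identities gives
$$\left|\,i(V_K(\rho),c)-\ell_{\III^+_K(\rho)}(c)\,\right|\leq \frac{C}{\sqrt{|K^*|}},$$
which tends to zero as $K\to -1$. Combined with the uniform convergence $\ell_{\III^+_K(\rho)}(c)\to i(\bb^+(\rho),c)$ on $\cK$ from the preceding Proposition, this yields the uniform convergence $i(V_K(\rho),c)\to i(\bb^+(\rho),c)$ on $\cK$.

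Applying this argument to each of the finitely many curves $c_1,\dots,c_N$ used to embed $\cML(S^+)$ into $\R^{\{c_1,\dots,c_N\}}$ then gives the stated uniform convergence of $V_K$ to $\bb^+$ on $\cK$. The only point that really needs attention, and the crux of the argument, is the uniformity in $\lambda$ of the earthquake estimate~: it is this uniformity that allows the error $C/\sqrt{|K^*|}$ to vanish regardless of how the intermediate laminations $\lambda_K(\rho)$ may diverge as $K\to -1$. Since this is already the content of \cite[Lemma 4.4]{dularschlenker2024pleating}, no new technical obstacle arises in the proof.
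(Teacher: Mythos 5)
Your argument is correct and is exactly the intended route: the paper's own justification is precisely to combine the length-spectrum convergence of $\III^+_K$ to $\bb^+$ with the uniform earthquake estimate of \cite[Lemma 4.4]{dularschlenker2024pleating}, and your bookkeeping of the curvature and length rescalings (lengths scale by $\sqrt{|K^*|}$, intersection numbers by linearity) is what makes the error term $C/\sqrt{|K^*|}$ vanish as $K\to -1$. Nothing is missing.
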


\subsection{Fibres of a limit of homeomorphism}\label{sec:fibres_of_limit}
A subset $K\subset N$ is a \emph{neighbourhood retract} if it has an open neighbourhood $\Omega\subset N$ such that the inclusion $j\colon K\monic\Omega$ has a retraction $r\colon\Omega\to K$, i.e.\ $r\circ j=\id_{K}$. A metrisable space $X$ is an \emph{absolute neighbourhood retract} (ANR) if, for any closed embedding $X\monic Y$ into a metrisable space $Y$, $X$ is a neighbourhood retract in $Y$.

A sequence of maps $(f_n\colon x\to Y)_{n\in\N}$ \emph{converges continuously} to $f_{\infty}$ if $f_n(x_n)\to f(x)$ whenever $x_n\to x$. When $Y$ is metrisable and $f_{\infty}$ is continuous, continuous convergence coincides with \emph{compact convergence}, i.e.\ uniform convergence on compact subsets of $X$ \cite[p.\ 98]{MR1084167}.

The following result is a variant of the theorem of Finney \cite{MR0224087} used in \cite[Theorem 4.6]{dularschlenker2024pleating}.

\begin{theorem}\label{thm:fibres_are_weakly_contractible}
	Let $N$ and $N'$ be two manifolds of the same dimension. Let $(f_n\colon N\to N')_{n\in\N}$ be a sequence of open embeddings converging continuously (equivalently, compactly) to the continuous map $f_{\infty}\colon N\to N'$, i.e.\ $f_n(x_n)\to f_{\infty}(x_{\infty})$ whenever $x_n\to x_{\infty}$. If a fibre of $f_{\infty}$ is compact and is an absolute neighbourhood retract, then it is contractible.
\end{theorem}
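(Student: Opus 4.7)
The plan is to verify weak contractibility directly: given $y_0\in N'$ with $F\coloneqq f_\infty^{-1}(y_0)$ a compact ANR, I want to show that any continuous $\phi\colon\Sp^k\to F$ extends to $\Phi\colon\DD^{k+1}\to F$. Being a compact ANR embedded in the manifold $N$, $F$ admits an open neighbourhood $\Omega\subset N$ together with a retraction $r\colon\Omega\to F$, and I pick a compact neighbourhood $K$ of $F$ with $F\subset\operatorname{int}(K)\subset K\subset\Omega$. Since $F\cap\partial K=\emptyset$ (where $\partial K$ denotes the topological boundary of $K$ in $N$) and $\partial K$ is compact, the compact set $f_\infty(\partial K)\subset N'$ avoids $y_0$, so I can fix a small contractible open ball $B\subset N'$ around $y_0$ disjoint from $f_\infty(\partial K)$.

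The main step, and the one I expect to carry the proof, is to show that for $n$ large, $B$ lifts homeomorphically into $\operatorname{int}(K)$ through $f_n$. Continuous convergence applied on the compact set $K$ gives, for $n$ large, simultaneously $f_n(\partial K)\cap B=\emptyset$ and $f_n(F)\subset B$. The key point is that, because $f_n\colon N\to N'$ is an open embedding, $f_n(\operatorname{int}(K))$ is open in $N'$, and therefore the topological boundary $\partial_{N'}f_n(K)$ is contained in $f_n(\partial K)$, hence disjoint from $B$. Consequently $B\cap f_n(K)$ is both closed (as $f_n(K)$ is compact, hence closed, in $N'$) and open in the connected ball $B$, and non-empty as it contains $f_n(F)$. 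It follows that $B\subset f_n(K)$, and since $f_n^{-1}(B)$ is an open subset of $N$ contained in $K$ it must lie in $\operatorname{int}(K)\subset\Omega$; injectivity of $f_n$ then guarantees that $f_n$ restricts to a homeomorphism $f_n^{-1}(B)\to B$.

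With this in hand, the conclusion is routine. The composition $f_n\circ\phi\colon\Sp^k\to B$ extends to a continuous map $\Psi\colon\DD^{k+1}\to B$ because $B$ is contractible. Lifting produces $\tilde\phi\coloneqq f_n^{-1}\circ\Psi\colon\DD^{k+1}\to f_n^{-1}(B)\subset\Omega$, and because $\phi$ already takes values in $F\subset f_n^{-1}(B)$ with $f_n$ injective there, $\tilde\phi$ agrees with $\phi$ on $\Sp^k$. Post-composing with the retraction yields $\Phi\coloneqq r\circ\tilde\phi\colon\DD^{k+1}\to F$, which still extends $\phi$ since $r$ restricts to the identity on $F$. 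The equal-dimensions assumption enters only implicitly, through the inclusion $\partial_{N'}f_n(K)\subseteq f_n(\partial K)$, which relies on $f_n$ being a local homeomorphism on $\operatorname{int}(K)$.
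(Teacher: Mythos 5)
Your proof is correct, and it takes a genuinely different route from the paper's. The paper argues by contradiction: assuming some $\sigma\colon\Sp^k\to F$ is essential in $F$ (hence in $\Omega$, via the retraction), it pushes $\sigma$ forward by $f_n$ into ever smaller contractible neighbourhoods $U_i$ of the base point, observes that the resulting null-homotopies must cross $f_n(\partial\Omega)$ because $f_n$ is an embedding, and extracts from these crossing points a point of $\partial\Omega$ lying in the fibre --- a contradiction with continuous convergence. Your argument is direct and constructive: the clopen/connectedness argument based on $\partial_{N'}f_n(K)\subseteq f_n(\partial K)$ shows that for large $n$ the ball $B$ lifts through $f_n$ to a contractible open set squeezed between $F$ and $\Omega$, and the extension of $\phi$ is then obtained by lifting a null-homotopy and retracting. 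This buys you slightly more than the statement asks: since $F$ is then a retract of the contractible set $f_n^{-1}(B)$, it is actually contractible, not merely weakly so (the paper only remarks this as an a posteriori fact about compact weakly contractible ANRs). Two trivial points of hygiene: to get $f_n(\partial K)\cap B=\emptyset$ for large $n$ from uniform convergence on the compact set $\partial K$, you should choose $B$ at positive distance from the compact set $f_\infty(\partial K)$ (e.g.\ of radius half the distance from $y_0$ to $f_\infty(\partial K)$), not merely disjoint from it; and the existence of the compact neighbourhood $K$ with $F\subset\operatorname{int}(K)\subset K\subset\Omega$ uses local compactness of the manifold $N$. Both are immediate, so there is no gap.
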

\begin{proof}
	Let $K\coloneqq f_{\infty}^{-1}(\ast)$ be a (nonempty) compact fibre, which is a retract of an open neighbourhood $\Omega$, which we can assume to have compact closure. Let $j\colon K\monic\Omega$ be the inclusion and $r\colon\Omega\to K$ a retraction. Assuming that $K$ is not contractible, we will show that $f_{\infty}$ sends a point of $\partial\Omega$ to $\ast$.

	Let $U\subset N'$ be a contractible neighbourhood of $\ast$. Since $K$ is compact, we have $f_n(K)\subset U$ for all $n$ large enough. Fix such a $n\in\N$. Thus $f_n(K)\subset U$. Since $U$ is contractible, there is a homotopy $H\colon K\times [0,1]\to U$ from $H(-,0)=f_n\restr{K}\colon K\to U$ to a constant map $H(-,1)$. But recall that $f_n$ is an open embedding. Thus $H(K\times [0,1])$ cannot be entirely contained in $f_n(\Omega)$, for otherwise $r\circ (f_n\restr{\Omega})^{-1}\circ H\colon K\times [0,1]\to K$ would be a nulhomotopy of $\id_K$.
	
	Since $H(K\times [0,1])$ is connected ($H$ is a nulhomotopy, hence every point in its image is connected by a path to the constant image of $H(-,1)$), it must intersect $f_n(\partial\Omega)$ (which equals $\partial f_n(\Omega)$ since $f_n$ is an open embedding), say at a point $f_n(x_U)$ with $x_U\in\partial\Omega$. In particular, $f_n(x_U)\in U$.

	Considering a nested sequence of contractible neighbourhoods $U_0\supset U_1\supset\dots\ni\ast$ such that $\bigcap_{i=0}^{\infty}U_i=\ast$, the above paragraph gives a sequence $(n_i)_{i\in\N}$ in $\N$, with $n_i\to\infty$, and a sequence of points $(x_i)_{i\in\N}$ in $\partial\Omega$, such that $f_{n_i}(x_i)\in U_i$ for all $i\in\N$.

	By compactness of $\partial\Omega$, up to extracting a subsequence, $(x_i)_{i\in\N}$ converges to some $x_{\infty}\in\partial\Omega$. By continuous convergence, we obtain
	$$f_{\infty}(x_{\infty})=\lim_{i\to\infty} f_{n_i}(x_i)=\ast,$$
	which is impossible since $\partial\Omega$ does not meet $K=f_{\infty}^{-1}(\ast)$. This concludes the proof that $K$ is contractible.
\end{proof}

We are finally able to prove Proposition \ref{thm:prescribed_metric_and_bending}, i.e.\ that $\mm^-\times\bb^+\colon\cQF_o(S)\to\cT(S^-)\times\cML_{<\pi}(S^+)$ is a homeomorphism, as in \cite{dularschlenker2024pleating} for $\bb^-\times\bb^+$.
\begin{proof}[Proof of Theorem \ref{thm:prescribed_metric_and_bending}]
	First, using Proposition \ref{prop:properness} with empty parabolic locus and ending lamination, the map $\mm^-\times\bb^+$ is proper. In particular, its fibres are compact. Let $F$ be a fibre of $\mm^-\times\bb^+$.

	By Lemmas \ref{lem:real_analyticity} and \ref{lem:analytic_is_ANR}, $F$ is a real-analytic space and an ANR.

	Since $\mm^-\times\bb^+$ is the limit of the homeomorphisms $\mm^-\times V_K$ and the convergence is uniform on compacts, $F$ is contractible by Theorem \ref{thm:fibres_are_weakly_contractible}.

	Since $F$ is a compact real-analytic space, it has a fundamental class modulo two (by Borel-Haefliger and Sullivan, see Theorem \ref{thm:fundamental_class}), i.e.\ a nonzero homology class $\alpha\in H_{\dim(F)}(F;\Z/2)$. But $F$ is contractible, hence its homology is trivial in all degrees except $0$. Since $\alpha\neq 0$, we must have $\dim(F)=0$. Therefore, $F$ is a contractible compact real-analytic space of dimension $0$, that is, a point. This proves that $\mm^-\times\bb^+$ is injective. By invariance of domains, it is an open embedding. This concludes, since a proper open embedding into a connected space is a homeomorphism.
\end{proof}

\subsection{Conclusion}
We now have all the required ingredients to conclude the proof of the main result.

\begin{proof}[Proof of the main theorem]
Since
$$\mm^-\times\bb^+\colon\cAH_o^+(S;e^-)\to\cT(\Sigma^-)\times\cML^{\perp e^-}_{<\pi}(S^+)$$
is proper (Proposition \ref{prop:properness}), continuous (on the first factor, by the simultaneous uniformization theorem, see Section \ref{sec:quasi_conformal_deformations_kleinian_groups}~; on the second factor, by Proposition \ref{prop:continuity_bending_map}), and its domain and target are manifolds of the same dimension (By \cite[Theorem 4.7]{MR1150583}, $\cML^{\perp e^-}_{<\pi}(S^+)$ is a manifold of the same dimension as $\cT(S^+)$), it suffices to show that it is injective. With the notation of Section \ref{sec:outline}, it suffices to show that $\bb^+_{\infty}$ is injective. But it is the limit of the open embeddings $\bb^+_t$, when $t\to\infty$, and the convergence is continuous since $\BB^+$ is continuous. By following the proof of Proposition \ref{thm:prescribed_metric_and_bending} and using Theorem \ref{thm:fibres_are_weakly_contractible}, we obtain that $\bb^+_{\infty}$ is injective, hence $\bb^+$ is injective too.
\end{proof}

\bibliographystyle{alpha}
\bibliography{nondegenerate_end_preprint.bbl}
\end{document}